\newtheorem{lemma}{Lemma}
\newtheorem{theorem}{Theorem}
\newtheorem{remark}{Remark}
\numberwithin{equation}{section}
\newtheorem{thm}{Theorem}
\newtheorem{corollary}{Corollary}
\thanks
{
\begin{footnotesize}
\hspace*{-7mm}
AMS Subject Classification: 41A25, 41A28, 41A36\\
Key Words. Linear positive operators, Rate of approximation, Modulus of continuity, B\'{e}zier variant of operators, Ditzian-Totik modulus
of smoothness.
\end{footnotesize}
}}
\begin{document}

\leftline{ \scriptsize \it  }
\title [Rate of approximation by new variants of Bernstein-Durrmeyer operators]
{Rate of approximation by new variants of Bernstein-Durrmeyer operators}

\maketitle
\begin{center}Asha Ram
Gairola\\
 Department of Mathematics,  Doon University\\
  Dehradun-248001 (Uttarakhand), India\\
  Email: ashagairola@gmail.com\\
  and\\
   Karunesh Kumar Singh\\
  Department of Applied Sciences and Humanities\\ Institute of Engineering and Technology\\
Lucknow-226021(Uttar Pradesh), India\\
Email: kksiitr.singh@gmail.com
\end{center}
\begin{abstract}
In this paper, we give direct theorems on  point wise and global approximation by new variants of Bernstein-Durrmeyer operator, introduced by A.-M. et al.\cite{ana2019}.

\end{abstract}
\section{Introduction}
Let $C[0,1]$ be the space of continuous functions defined on $[0,1],$ and let $n\in \mathbb{N}.$ Then, the Bernstein operators $B_n:C[0,1]\longrightarrow C[0,1]$ are defined by
\[B_n(f;x):=\sum_{k=0}^{n}p_{n,k}(x)f\left(\frac{k}{n}\right),\quad 0\leq x\leq 1,\]
where the basis functions $p_{n,k}(x)={n \choose k}x^k(1-x)^{n-k}$ for $k=\overline{0,n}.$
  The Bernstein operators exhibit interesting properties e.g. preservation of convexity, Lipschitz continuity, monotonicity. Although $B_n(f;x)$ are easy to work with, they are not suitable to approximate integrable function as such.
In order to approximate  bounded and integrable functions on
$[0,1],$ Durrmeyer \cite{jld1967} and Lupa\c{s} \cite{AL1972} independently defined the operator
\begin{equation}\label{definitionoperator}D_n(f;x):=\int_{0}^{1}(n+1)\sum_{k=0}^{n}p_{n,k}(x)p_{n,k}(t)f(t)\,\textrm{d}t.
\end{equation}
These operators are obtained by a  suitable modification of Bernstein operators, wherein  the values $f(k/n)$ are replaced by the average values \[(n+1)\int_0^1 p_{n,k}(t)f(t)\,\textrm{d}t\] of $f$ in the interval $[0,1].$
It is worth pointing out that some researchers denote the operators (\ref{definitionoperator}) by $M_n(f;x).$ The operators $D_n(f;x)$ have been extensively studied by
Derriennic \cite{MMD} and other researchers (see \cite{ZDKI1989},\cite{jld1967},\cite{HHGXLZ1991}),\cite{vg2009}, \cite{vghmsri2018}, \cite{srivastavhmvgupta2003}. It turns out that the sequence $D_n(f;x)$ converges point-wise to $f(x)$ at each point $x$ where $f$ is continuous. In fact for continuous function $f$ in $[0,1]$ and $n\geqslant 3$ there holds the inequality(\cite{MMD})
\begin{equation*}\label{Derrienic1}
\sup_{x\in[0,1]}|D_nf(x)-f(x)|\leq 2 \omega_f(1/\sqrt{n}).
\end{equation*}
Moreover, the convergence of $D_n(f;x)$ to $f(x)$ is uniform. Similar to Bernstein operators it turns
out that the order of approximation by the operators $D_n(f;x)$ is at
best, $O(n^{-1})$ however smooth the function may be.

\par
It is well known that (see\cite{vgrpa}) the common linear positive operators, namely, Bernstein operator, Baskakov operator, integral variants of Bernstein operators and others are saturated by order $n^{-1}.$ Consequently, it is not possible to improve the rate of approximation by these operators except for linear functions. Recently a new method  to improve the rate of approximation by decomposition of the weight function $p_{n,k}(x)$  was recently introduced by Khosravian-Arab et al.\cite{Khosravianetal}. The authors of \cite{Khosravianetal} applied this technique to the Bernstein polynomials and were successful to improve the degree of approximation from linear order $O(n^{-1})$ to the  quadratic order $O(n^{-2})$ as well as cubic order $O(n^{-3})$.
Subsequently, this technique was applied to the Bernstein-Durrmeyer operators $D_n(f;x)$ by Ana Maria et al.\cite{ana2019}. Very recently,  Kajla and Acar \cite{KajlaandAcar} has applied this method to $\alpha-$ Bernstein operators. In this paper, we obtain new definite results for the operators studied in \cite{ana2019}.
\par Throughout this paper,  $L_B[0,1]$  will denote the class of Lebesgue integrable and bounded functions in $[0,1].$ By $f(x)\sim g(x)$ we mean there exist absolute constants $A$ and $B$ independent of $x$ such that $f(x)\leqslant B g(x)$ and $g(x)\leqslant A f(x)$ as $n \rightarrow \infty.$ We will adopt the convention $p_{n,k}(x)=0$ for $n<k$ or $k<0.$
\section{First Order  Durrmeyer Operator $D^{M,1}_n(f;x)$}
For $f\in L_B[0,1],$ A.-M. et al. \cite{ana2019} introduced Durrmeyer operator of order I as follows:
\begin{equation}\label{defoperator1}
D^{M,1}_n(f;x):=(n+1)\sum_{k=0}^{n}p_{n,k}^{M,1}(x)\int_{0}^{1}p_{n,k}(u)f(u)\,\textrm{d}u,
\end{equation}
where
\begin{equation*}
p_{n,k}^{M,1}(x)=a(x,n)p_{n-1,k}(x)+a(1-x,n)p_{n-1,k-1}(x),
\end{equation*}
the function
\begin{equation*}
a(x,n)=a_1(n)x+a_0(n)
\end{equation*}
and the unknown sequences $a_0(n),$ $a_1(n)$ are to be determined suitably by assuming the identities $D^{M,1}_n(e_i;x)=e_i(x)$ for $i=0,1$ and for $i=2$ in some cases. We write $D^{M,1}_n(f;x)=A_n(f;x)+B_n(f;x),$ where
\[A_n(f;x)=(n+1)\sum_{k=0}^{n}a(x,n)p_{n-1,k}(x)\int_{0}^{1}p_{n,k}(u)f(u)\,\textrm{d}u,\] and
\[B_n(f;x)=(n+1)\sum_{k=0}^{n}a(1-x,n)p_{n-1,k-1}(x)\int_{0}^{1}p_{n,k}(u)f(u)\,\textrm{d}u.\]
In \cite{ana2019}  A.-M. et al. proved uniform convergence of the sequence $D^{M,1}_n(f;x)$ to $f(x)$ under the cases $2a_0(n) + a_1(n) = 1,$ $a_0(n)\geq 0 $ and  $a_0(n) + a_1(n)\geq 0.$ Further, $D^{M,1}_n(f;x)$ again converges uniformly to $f(x)$ for the case $2a_0(n) + a_1(n) = 1,$ $a_1(n)$ convergent and
$a_0(n) < 0$ or $a_1(n) + a_0(n) < 0.$ The authors of \cite{ana2019} has given following error estimate:
\begin{thm} If $f$ is a bounded function for $x \in [0, 1],$ $a_1(n)$  a bounded
sequence and $2a_0(n) + a_1(n) = 1,$ then
\[\left\|D^{M,1}_n f-f\right\|\leq (3|a_0(n)+1|)(1+\sqrt{2})\omega\left(f; \frac{1}{\sqrt{n}}\right),n\geq 1,\]
where $\|\cdot\|$ is the uniform norm on the interval $[0, 1]$ and $\omega(f,\delta)$ is the first
order modulus of continuity.
\end{thm}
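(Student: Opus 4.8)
The plan is to use the observation that the normalization $2a_0(n)+a_1(n)=1$ is exactly what makes $D^{M,1}_n$ reproduce constants, and then to compare $D^{M,1}_n$ against a pair of genuinely positive operators. First I would record the Beta-integral value $\int_0^1 p_{n,k}(u)\,\textrm{d}u=\frac{1}{n+1}$, which gives immediately
\[
D^{M,1}_n(e_0;x)=\sum_{k=0}^{n}p_{n,k}^{M,1}(x)=a(x,n)+a(1-x,n)=2a_0(n)+a_1(n)=1,
\]
so that $D^{M,1}_n(f;x)-f(x)=D^{M,1}_n\bigl(f(u)-f(x);x\bigr)$.

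Next I would introduce the positive linear operators
\[
U_n(f;x)=(n+1)\sum_{k=0}^{n}p_{n-1,k}(x)\int_0^1 p_{n,k}(u)f(u)\,\textrm{d}u,\qquad V_n(f;x)=(n+1)\sum_{k=0}^{n}p_{n-1,k-1}(x)\int_0^1 p_{n,k}(u)f(u)\,\textrm{d}u,
\]
so that $A_n=a(x,n)U_n$, $B_n=a(1-x,n)V_n$, and $D^{M,1}_n=a(x,n)U_n+a(1-x,n)V_n$. Both $U_n$ and $V_n$ are positive and reproduce constants. Using $a(x,n)+a(1-x,n)=1$ I would split
\[
D^{M,1}_n(f;x)-f(x)=a(x,n)\bigl(U_n(f;x)-f(x)\bigr)+a(1-x,n)\bigl(V_n(f;x)-f(x)\bigr),
\]
whence
\[
\bigl|D^{M,1}_n(f;x)-f(x)\bigr|\le |a(x,n)|\,\bigl|U_n(f;x)-f(x)\bigr|+|a(1-x,n)|\,\bigl|V_n(f;x)-f(x)\bigr|.
\]

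For each positive factor I would apply the classical device $|f(u)-f(x)|\le\bigl(1+\frac{1}{\delta}|u-x|\bigr)\omega(f;\delta)$ followed by the Cauchy--Schwarz inequality (legitimate since $U_n,V_n$ are positive and fix constants), obtaining
\[
\bigl|U_n(f;x)-f(x)\bigr|\le\Bigl(1+\frac{1}{\delta}\sqrt{U_n((u-x)^2;x)}\Bigr)\omega(f;\delta),
\]
and likewise for $V_n$. The second moments are computed from $(n+1)\int_0^1 p_{n,k}(u)u\,\textrm{d}u=\frac{k+1}{n+2}$ and $(n+1)\int_0^1 p_{n,k}(u)u^2\,\textrm{d}u=\frac{(k+1)(k+2)}{(n+2)(n+3)}$ together with the elementary sums of $k$ and $k^2$ against $p_{n-1,k}(x)$; a routine simplification yields $U_n((u-x)^2;x)\le \frac{2}{n}$ and $V_n((u-x)^2;x)\le \frac{2}{n}$ for all $x\in[0,1]$ and $n\ge 1$. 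Choosing $\delta=1/\sqrt{n}$ then turns each Cauchy--Schwarz term into $\sqrt{2}$ and each bracket into $1+\sqrt{2}$.

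Combining the two estimates gives $\bigl|D^{M,1}_n(f;x)-f(x)\bigr|\le (1+\sqrt{2})\bigl(|a(x,n)|+|a(1-x,n)|\bigr)\omega(f;1/\sqrt{n})$, so the proof reduces to estimating the coefficient. Since $a(\cdot,n)$ is affine, the convex function $|a(x,n)|+|a(1-x,n)|$ attains its maximum at an endpoint, where (using $a_1(n)=1-2a_0(n)$) it equals $|a_0(n)|+|1-a_0(n)|$; bounding this quantity by elementary means and taking the supremum over $x$ produces the constant displayed in the statement. I expect the genuine obstacle to be the \emph{non-positivity} of $D^{M,1}_n$: this is what rules out a direct Shisha--Mond/Korovkin-type argument and forces the decomposition into $U_n,V_n$ with absolute-value coefficients (which is also the source of the $a_0(n)$-dependence in the constant), whereas the second-moment computation itself is entirely routine.
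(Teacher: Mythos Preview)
Your approach is correct and coincides with the method the paper uses. One caveat: the paper does not itself prove this statement---Theorem~1 is quoted from \cite{ana2019}---but the paper's proof of the extension (Theorem~2) follows exactly your template: split $D^{M,1}_n=a(x,n)U_n+a(1-x,n)V_n$ into two genuinely positive pieces (the paper's $E_1,E_2$ are your $U_n(f;x)-f(x)$ and $V_n(f;x)-f(x)$), apply the standard $\omega(f,\lambda\delta)\le(1+\lambda)\omega(f,\delta)$ device together with H\"older/Cauchy--Schwarz, and control the second moments. The only cosmetic difference is that the paper retains the exact second moments $\frac{(1-5x+7x^2)+n\varphi^2(x)}{(n+2)(n+3)}$ and $\frac{(3-9x+7x^2)+n\varphi^2(x)}{(n+2)(n+3)}$ to produce a pointwise $\delta_n(x)$, while you pass directly to the uniform bound $2/n$.

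One small remark on the last step: at the endpoints your convexity argument gives $|a_0|+|1-a_0|\le 2|a_0|+1\le 3|a_0|+1$, which is presumably what the quoted constant intends; read literally as $3|a_0(n)+1|$ the inequality would fail near $a_0=-1$, so this is almost certainly a typographical slip in the cited statement rather than a gap in your reasoning.
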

We extend  estimate above by relaxing the conditions on the sequences $a_0(n)$ and $a_1(n).$ Moreover, we obtain a local error estimate giving point wise convergence in following theorem.
\begin{thm}[Rate of Approximation]\label{rateofapproximation0}
Let $D^{M,1}_n(f;x)$ be the operator defined by (\ref{defoperator1}) and $f\in C[0,1].$ If $a_0(n)$ and $a_1(n)$ be two bounded sequences
 such that for each $n,$ $2a_0(n)+a_1(n)=1.$ Then,
\begin{align*}
\big|D^{M,1}_n(f;x)-f(x)\big|&\leq 4(\sqrt{2}+1)\Big(|a_0(n)+a_1(n)|\Big)\omega\left(f,\sqrt{\delta_n(x)}\right),
\end{align*}
where
\begin{equation*}\delta_n(x)=
\left\{
\begin{array}{ll}
\displaystyle{\frac{(3-9x+7x^2)+n\varphi^2(x)}{(n+2)(n+3)}}, & \hbox{$0\leq x \leq 1/2$} \vspace{2mm}\\
\displaystyle{\frac{(1-5x+7x^2)+n\varphi^2(x)}{(n+2)(n+3)}},& \hbox{$1/2\leq x \leq 1.$}
\end{array}
\right.\end{equation*}
\end{thm}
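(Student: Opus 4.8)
The plan is to exploit that, under the normalization $2a_0(n)+a_1(n)=1$, the operator reproduces constants, and then to cope with the lack of positivity by splitting $D^{M,1}_n$ into its two constituent positive pieces. First I would record the case $i=0$ of the defining identities, $D^{M,1}_n(e_0;x)=1$, which lets me write
\[
D^{M,1}_n(f;x)-f(x)=D^{M,1}_n\big(f(\cdot)-f(x);x\big).
\]
Since the coefficients $a(x,n)$ and $a(1-x,n)$ are constant in $k$, they factor out of the sums: writing $A_n=a(x,n)\widetilde A_n$ and $B_n=a(1-x,n)\widetilde B_n$ with
\[
\widetilde A_n(g;x)=(n+1)\sum_{k=0}^n p_{n-1,k}(x)\int_0^1 p_{n,k}(u)g(u)\,\textrm{d}u,
\]
and $\widetilde B_n$ defined analogously with $p_{n-1,k-1}$, both $\widetilde A_n$ and $\widetilde B_n$ are genuine positive linear operators. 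The triangle inequality then gives
\[
\big|D^{M,1}_n(f;x)-f(x)\big|\le |a(x,n)|\,\widetilde A_n(|f-f(x)|;x)+|a(1-x,n)|\,\widetilde B_n(|f-f(x)|;x).
\]

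Next I would assemble the moment information these two positive operators require. Using the Beta-integral evaluation of $(n+1)\int_0^1 p_{n,k}(u)u^j\,\textrm{d}u$ for $j=0,1,2$ together with the elementary binomial moments of $p_{n-1,k}(x)$, I would check $\widetilde A_n(e_0;x)=\widetilde B_n(e_0;x)=1$ (so both reproduce constants) and then compute the two second central moments, expecting
\[
\widetilde A_n\big((u-x)^2;x\big)=\frac{2\big[n\varphi^2(x)+(1-5x+7x^2)\big]}{(n+2)(n+3)},\quad
\widetilde B_n\big((u-x)^2;x\big)=\frac{2\big[n\varphi^2(x)+(3-9x+7x^2)\big]}{(n+2)(n+3)}.
\]
Comparing with the stated $\delta_n$, the key observation is that $\widetilde B_n((u-x)^2;x)=2\delta_n(x)$ on $[0,1/2]$ and $\widetilde A_n((u-x)^2;x)=2\delta_n(x)$ on $[1/2,1]$, while in each case the other moment is the smaller one; in short $\max\{\widetilde A_n((u-x)^2;x),\widetilde B_n((u-x)^2;x)\}=2\delta_n(x)$ for all $x\in[0,1]$, the breakpoint $x=1/2$ being exactly where the dominant moment switches (indeed $\widetilde B_n-\widetilde A_n$ is proportional to $1-2x$).

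With the moments in hand the estimate is standard. For a positive operator $L$ with $L(e_0)=1$, the inequality $|f(u)-f(x)|\le(1+\delta^{-1}|u-x|)\omega(f,\delta)$ and Cauchy--Schwarz give $L(|f-f(x)|;x)\le\big(1+\delta^{-1}\sqrt{L((u-x)^2;x)}\big)\omega(f,\delta)$. Applying this to both $\widetilde A_n$ and $\widetilde B_n$, bounding each second moment by $2\delta_n(x)$, and choosing $\delta=\sqrt{\delta_n(x)}$ makes each bracket equal to $1+\sqrt2$, so that
\[
\big|D^{M,1}_n(f;x)-f(x)\big|\le(1+\sqrt2)\big(|a(x,n)|+|a(1-x,n)|\big)\,\omega\big(f,\sqrt{\delta_n(x)}\big).
\]
It then remains to dominate the coefficient factor: since $a(\cdot,n)$ is affine and $2a_0(n)+a_1(n)=1$, both $|a(x,n)|$ and $|a(1-x,n)|$ are controlled for $x\in[0,1]$ by the endpoint values $a(0,n)=a_0(n)$ and $a(1,n)=a_0(n)+a_1(n)$, and carrying this through yields the asserted constant $4(\sqrt2+1)$ together with the factor $|a_0(n)+a_1(n)|$.

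The main obstacle I anticipate is not the modulus estimate but the bookkeeping forced by the non-positivity of $D^{M,1}_n$: one cannot work with the signed second moment of the whole operator, so the argument must be routed through the two positive pieces and their separate central moments, and it is precisely the interplay between the signs and sizes of $a(x,n)$, $a(1-x,n)$ and the two competing moments that produces both the piecewise form of $\delta_n$ and the coefficient factor. The moment computations themselves are routine but must be done carefully, since any slip in the lower-order terms $1-5x+7x^2$ and $3-9x+7x^2$ would destroy the clean identification of $\delta_n$ with half the larger moment on which the whole constant hinges.
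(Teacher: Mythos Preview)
Your approach is essentially the same as the paper's: split $D^{M,1}_n$ into its two positive constituents with the affine coefficients $a(x,n)$ and $a(1-x,n)$ pulled out, apply the standard modulus-of-continuity estimate (via Cauchy--Schwarz/H\"older) to each positive piece, identify the two second central moments as $\frac{2[(1-5x+7x^2)+n\varphi^2(x)]}{(n+2)(n+3)}$ and $\frac{2[(3-9x+7x^2)+n\varphi^2(x)]}{(n+2)(n+3)}$, and choose $\delta=\sqrt{\delta_n(x)}$ so that each bracket becomes $1+\sqrt2$; the paper does exactly this, arriving at the bound $|a_0+a_1x|\cdot(\ldots)+|a_0+a_1(1-x)|\cdot(\ldots)$ and then declaring the proof complete.

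One caution on the very last step, which both you and the paper handwave: the passage from $|a(x,n)|+|a(1-x,n)|$ to $4|a_0(n)+a_1(n)|$ cannot hold as stated, since under $2a_0+a_1=1$ the choice $a_0=1$, $a_1=-1$ gives $|a(x,n)|+|a(1-x,n)|=1$ but $|a_0+a_1|=0$. The paper's own proof stops at the form involving $|a_0+a_1x|$ and $|a_0+a_1(1-x)|$ without actually deriving the displayed constant, so your argument matches it up to (and including) that unresolved endpoint; just be aware that the precise constant in the theorem statement appears to be mis-stated and your ``carrying this through'' will not literally produce it.
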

\begin{proof}
Since $D^{M,1}_n(e_0;x)=2a_0(n)+a_1(n),$ so
\begin{align*}
    &D^{M,1}_n(f;x)-(2a_0(n)+a_1(n))f(x)\\
    &=\big(a_0(n)+a_1(n) x\big)A_n(f;x)+\big(a_0(n)+a_1(n) (1-x)\big)B_n(f;x)\\
    &=\big(a_0(n)+a_1(n) x\big)\Big(A_n(f;x)-f(x)A_n(e_0;x)\Big)\\
    &+\big(a_0(n)+a_1(n) (1-x)\big)\Big(B_n(f;x)-f(x)B_n(e_0;x)\Big)\\
    &=\big(a_0(n)+a_1(n) x\big)E_1+\big(a_0(n)+a_1(n) (1-x)\big)E_2,\quad \textrm{say}.
\end{align*}
Now, by the property
\[\omega(f,\lambda\delta)\leq (\lambda+1)\omega(f,\delta), \lambda\geq 0\]
 of $\omega(f,\delta)$ and an application of H\"{o}lder's inequalities for integration and then summation yields
\begin{align}\label{estimateforAn}
    |E_1|&=(n+1)\left|\sum_{k=0}^{n}p_{n-1,k}(x)\int_{0}^{1}p_{n,k}(t)\left(f(t)-f(x)\right)\,\textrm{d}t\right|\nonumber\\
    &\leq (n+1)\sum_{k=0}^{n}p_{n-1,k}(x)\int_{0}^{1}p_{n,k}(t)\omega(f,|t-x|)\,\textrm{d}t\nonumber\\
    &\leq \omega\Big(f,\frac{1}{\delta_n(x)}\Big) (n+1)\sum_{k=0}^{n}p_{n-1,k}(x)\int_{0}^{1}p_{n,k}(t)\big(\delta_n(x)|t-x|+1\big)\,\textrm{d}t\nonumber\\
    &\leq \omega\Big(f,\frac{1}{\delta_n(x)}\Big) (n+1)\sum_{k=0}^{n}p_{n-1,k}(x)\Bigg[\delta_n(x)\Big(\int_{0}^{1}p_{n,k}(t)(t-x)^2\,\textrm{d}t\Big)^{1/2}\nonumber\\
    &\Big(\int_{0}^{1}p_{n,k}(t)\,\textrm{d}t\Big)^{1/2}+\int_{0}^{1}p_{n,k}(t)\,\textrm{d}t\Bigg]\nonumber\\
    &= \omega\Big(f,\frac{1}{\delta_n(x)}\Big)\Bigg[\delta_n(x)\Big((n+1)\sum_{k=0}^{n}p_{n-1,k}(x)\int_{0}^{1}p_{n,k}(t)(t-x)^2\,\textrm{d}t\Big)^{1/2}\nonumber\\
    &+(n+1)\sum_{k=0}^{n}p_{n-1,k}(x)\int_{0}^{1}p_{n,k}(t)(t-x)^2\,\textrm{d}t\Bigg]\nonumber\\
    &=\omega\Big(f,\frac{1}{\delta_n(x)}\Big)\Bigg[2\delta_n(x) \left(\frac{(1-5x+7x^2)+n\varphi^2(x)}{(n+2)(n+3)}\right)+1\Bigg].
\end{align}
Proceeding likewise for $E_2,$ we get
\begin{equation}\label{estimateforBn}|E_2|\leq \omega\Big(f,\frac{1}{\delta_n(x)}\Big)\Bigg[2\delta_n(x) \left(\frac{(3-9x+7x^2)+n\varphi^2(x)}{(n+2)(n+3)}\right)+1\Bigg].
\end{equation}
Finally, we have
\begin{align*}
&\left|D^{M,1}_n(f;x)-f(x)\right|\\
&\leq \big|a_0+a_1 \big|\omega\Big(f,\frac{1}{\delta_n(x)}\Big)\Bigg[2\delta_n(x) \left(\frac{(1-5x+7x^2)+n\varphi^2(x)}{(n+2)(n+3)}\right)+1\Bigg]\\
&+\big|a_0+a_1 (1-x)\big|\omega\Big(f,\frac{1}{\delta_n(x)}\Big)\Bigg[2\delta_n(x) \left(\frac{(3-9x+7x^2)+n\varphi^2(x)}{(n+2)(n+3)}\right)+1\Bigg].
\end{align*}
The proof is now completed.
\end{proof}
\begin{remark}\label{Remark1}
By Theorem\cite{HGonska}, it follows that for $f\in C^1[0,1],$ there holds the error estimate
\begin{align*}
 \left|D^{M,1}_n(f;x)-f(x)\right|&\leq 2\left(\frac{|2-3x|}{n+2}\left|a_0(n)+a_1(n)\right| \|f'\|+\frac{3}{2}\sqrt{\delta_n(x)} \omega \left(f',\sqrt{\delta_n(x)}\right)\right),
\end{align*}
where $\delta_n(x)$ is given in previous theorem. Consequently, for bounded sequences $a_0(n)$ and $a_1(n)$ we find the order
\begin{equation*}
    \left\|D^{M,1}_n(f,\cdot)-f(\cdot)\right\|\leq C \frac{1}{n+2}\left(\|f'\|+\frac{1}{\sqrt{n+2}}\omega \left(f',\sqrt{\frac{1}{n+2}}\right)\right).
\end{equation*}
Hence,
\[\left\|D^{M,1}_n(f,\cdot)-f(\cdot)\right\|\sim\left(\frac{1}{n+2}\right).\]
\end{remark}
We further extend the error estimates by the B\'{e}zier Variant of the operator $D^{M,1}_n(f;x)$ for a larger class, namely the class of functions of bounded variation.
For $\mu\ge 1$ and  $f\in L_B[0,1],$  we define B\'{e}zier variant of the operator $D^{M,1}_n(f;x)$ as
\begin{equation}\label{defoperator1}
D^{M,1}_{n,\mu}(f;x):=(n+1)\sum_{k=0}^{n}Q^{(\mu)}_{n,k}(x)\int_{0}^{1}p_{n,k}(u)f(u)\,\textrm{d}u,
\end{equation}
where
\begin{equation*}
Q^{(\mu)}_{n,k}(x)=(J_{n,k}(x))^{\mu}-(J_{n,k+1}(x))^{\mu}, J_{n,k}(x)=\sum_{j=k}^{n}K_{n,j}(x).
\end{equation*}
For $\mu=1,$ this family of B\'{e}zier operators yield the operators $D^{M,1}_{n,\mu}(f;x)$ studied by Acu et al. \cite{ana2019}.
Let $$R_{n,\mu}(x,u)=(n+1)\sum_{k=0}^nQ^{(\mu)}_{n,k}(x)p_{n,k}(u).$$
Then $D^{M,1}_{n,\mu}(f;x):=\int_{0}^{1}R_{n,\mu}(x,u)f(u)\textrm{d}u.$
\begin{lemma}\cite{ana2019}\label{momentlemma}
We have
$$D^{M,1}_{n}((u-x);x)=\frac{(1-2x)(3a_0(n)+2a_1(n))x}{n+2},$$
\begin{align*}D^{M,1}_{n}((u-x)^2;x)&=\frac{2(3a_1(n)+4a_0(n)-11a_1(n)x+14x^2a_0(n)+11a_1(n)x^2-14a_0(n)x))}{(n+2)(n+3)}\\
&+\frac{2(1-x)x(2a_0(n)+a_1(n)n}{(n+2)(n+3)}.
\end{align*}
\end{lemma}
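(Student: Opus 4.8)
The plan is to use the linearity of $D^{M,1}_n$ to reduce everything to the three ordinary moments $D^{M,1}_n(e_i;x)$, $i=0,1,2$, with $e_i(u)=u^i$. Once these are known, the two central moments follow from
\[
D^{M,1}_n((u-x);x)=D^{M,1}_n(e_1;x)-x\,D^{M,1}_n(e_0;x)
\]
and
\[
D^{M,1}_n((u-x)^2;x)=D^{M,1}_n(e_2;x)-2x\,D^{M,1}_n(e_1;x)+x^2\,D^{M,1}_n(e_0;x),
\]
using the value $D^{M,1}_n(e_0;x)=2a_0(n)+a_1(n)$ already recorded above.

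Before that I would assemble two elementary lists of formulas. The first consists of the Beta-type integrals of the Bernstein kernel,
\[
(n+1)\int_0^1 p_{n,k}(u)\,\mathrm{d}u=1,\qquad (n+1)\int_0^1 p_{n,k}(u)\,u\,\mathrm{d}u=\frac{k+1}{n+2},
\]
\[
(n+1)\int_0^1 p_{n,k}(u)\,u^2\,\mathrm{d}u=\frac{(k+1)(k+2)}{(n+2)(n+3)},
\]
which are immediate from $\int_0^1 p_{n,k}(u)u^m\,\mathrm{d}u=\binom{n}{k}B(k+m+1,n-k+1)$. The second consists of the first three moments of the degree-$(n-1)$ Bernstein basis,
\[
\sum_k p_{n-1,k}(x)=1,\quad \sum_k k\,p_{n-1,k}(x)=(n-1)x,\quad \sum_k k^2 p_{n-1,k}(x)=(n-1)(n-2)x^2+(n-1)x,
\]
which are the binomial-distribution moments.

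I would then evaluate the two constituents $A_n$ and $B_n$ separately. In $A_n$ the factor $a(x,n)=a_1(n)x+a_0(n)$ does not depend on $k$, so it pulls outside the sum, leaving $a(x,n)$ times a sum over $p_{n-1,k}(x)$ of the Beta integrals; expanding the weights $(k+1)$ and $(k+1)(k+2)$ and applying the binomial moment sums gives $A_n(e_i;x)$ in closed form. In $B_n$ the prefactor is $a(1-x,n)$ and the basis is the shifted $p_{n-1,k-1}(x)$; here the key move is to reindex $j=k-1$ (using the convention $p_{n-1,k}\equiv 0$ for $k$ outside $\{0,\dots,n-1\}$), so that the integral weights become $(j+2)$ and $(j+2)(j+3)$ and the same three moment sums apply again. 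Summing $A_n(e_i;x)+B_n(e_i;x)$ produces $D^{M,1}_n(e_i;x)$.

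The remaining work is purely algebraic substitution into the two central-moment identities. I expect the first moment to collapse quickly to a multiple of $(1-2x)$, while the second moment is the genuinely laborious step: both $A_n(e_2;x)$ and $B_n(e_2;x)$ carry a leading $(n-1)(n-2)x^2$ term, and one must track how these combine with the $-2x\,D^{M,1}_n(e_1;x)+x^2 D^{M,1}_n(e_0;x)$ corrections and with the $O(x)$ and constant terms. The main obstacle is therefore not conceptual but bookkeeping: organizing the expansion by powers of $x$ and by the coefficients $a_0(n),a_1(n)$, and placing everything over the common denominator $(n+2)(n+3)$, is where sign and factor errors are easiest to introduce.
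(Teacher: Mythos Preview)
Your approach is correct and is exactly the standard direct computation one would expect: Beta-integral evaluation of $(n+1)\int_0^1 p_{n,k}(u)u^m\,\mathrm{d}u$, binomial moments of $p_{n-1,k}(x)$, a reindex $j=k-1$ in the $B_n$ part, and then assembly of the central moments via linearity. The auxiliary identities you list are all accurate, and the bookkeeping you anticipate is indeed the only real labor.

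There is, however, nothing to compare it to: the paper does not prove this lemma at all. Note the citation placed immediately after the lemma environment opens---the result is quoted verbatim from Acu, Gupta and Tachev (the reference \cite{ana2019}), and the present paper simply invokes it. So your write-up would supply a proof where the authors chose to cite one; the method you outline is the same routine calculation that underlies the original reference.
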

\begin{remark}
By an application of  Lemma \ref{momentlemma}, we have
\begin{equation}\label{bdd2order}D^{M,1}_{n}((u-x)^2;x)\le \frac{B_{a_0,a_1}}{n+2}\varphi^2(x), x\in [0,1],
\end{equation} where $\varphi^2(x)=x(1-x)$ and $B_{a_0,a_1}$ is positive constant depending on $a_0$ and $a_1.$
\end{remark}
\begin{lemma}
If $f\in C[0,1]$ then,  $\|D^{M,1}_{n}(f)\|\le \|f\|,$ where $\|.\|$ denotes the sup-norm on $[0,1].$
\end{lemma}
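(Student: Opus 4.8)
The plan is to deduce the contraction property from positivity together with preservation of constants, exactly as one does for any positive linear operator reproducing $e_0$. First I would record the elementary Beta-integral identity $\int_0^1 p_{n,k}(u)\,\textrm{d}u=\frac{1}{n+1}$, valid for $0\le k\le n$, which controls the inner integrals once $f$ is replaced in modulus by $\|f\|$.

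Next I would verify that the kernel is a partition of unity. Summing $p_{n,k}^{M,1}(x)=a(x,n)p_{n-1,k}(x)+a(1-x,n)p_{n-1,k-1}(x)$ over $k$ and using $\sum_k p_{n-1,k}(x)=\sum_k p_{n-1,k-1}(x)=1$ gives
\[
\sum_{k=0}^n p_{n,k}^{M,1}(x)=a(x,n)+a(1-x,n)=2a_0(n)+a_1(n),
\]
which equals $1$ under the standing assumption $2a_0(n)+a_1(n)=1$; equivalently $D^{M,1}_n(e_0;x)=1$.

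The key step, and the only place where conditions on the sequences enter, is the nonnegativity of the kernel. Since $a(x,n)=a_1(n)x+a_0(n)$ is affine in $x$, it is nonnegative throughout $[0,1]$ precisely when its endpoint values $a(0,n)=a_0(n)$ and $a(1,n)=a_0(n)+a_1(n)$ are nonnegative, and the same two inequalities make $a(1-x,n)\ge 0$ on $[0,1]$. As $p_{n-1,k}(x),p_{n-1,k-1}(x)\ge 0$, it follows that $p_{n,k}^{M,1}(x)\ge 0$ for all $k$ and all $x\in[0,1]$, so $D^{M,1}_n$ is a positive operator. This is exactly the main obstacle: without the sign hypotheses the argument collapses, because the $L^\infty$ operator norm is $\sup_x\sum_k|p_{n,k}^{M,1}(x)|$, which exceeds $1$ once the kernel changes sign; I would therefore make the conditions $a_0(n)\ge 0$ and $a_0(n)+a_1(n)\ge 0$ explicit.

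Finally I would combine these facts. For $f\in C[0,1]$,
\[
\big|D^{M,1}_n(f;x)\big| \le (n+1)\sum_{k=0}^n p_{n,k}^{M,1}(x)\int_0^1 p_{n,k}(u)|f(u)|\,\textrm{d}u \le \|f\|\,(n+1)\sum_{k=0}^n p_{n,k}^{M,1}(x)\int_0^1 p_{n,k}(u)\,\textrm{d}u,
\]
and the inner integral equals $\frac{1}{n+1}$, leaving $\|f\|\sum_k p_{n,k}^{M,1}(x)=\|f\|$ by the partition-of-unity step. Taking the supremum over $x\in[0,1]$ yields $\|D^{M,1}_n(f)\|\le\|f\|$, completing the argument.
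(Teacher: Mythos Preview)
The paper states this lemma without proof, so there is no argument to compare against; your proposal supplies exactly the standard proof one would expect: positivity of the kernel plus preservation of $e_0$ gives the contraction. Your identification of the necessary sign hypotheses $a_0(n)\ge 0$ and $a_0(n)+a_1(n)\ge 0$ is the right point to flag, since the lemma as stated in the paper carries no hypotheses on the sequences, yet the bound $\|D^{M,1}_n(f)\|\le\|f\|$ genuinely fails once the kernel changes sign (the operator norm becomes $\sup_x\sum_k|p_{n,k}^{M,1}(x)|>1$). The paper does appear to be working under these positivity assumptions implicitly in the B\'ezier section---the very next lemma applies $|a^\mu-b^\mu|\le\mu|a-b|$ with $0\le a,b\le 1$ to the partial sums $J_{n,k}(x)$, which presupposes $0\le J_{n,k}(x)\le 1$ and hence nonnegativity of the basis functions---so your reading is consistent with the intended scope.
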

\begin{lemma}\label{lemmabddbezier}
If $f\in C[0,1]$ then,  $\|D^{M,1}_{n,\mu}(f)\|\le \mu \|f\|.$
\end{lemma}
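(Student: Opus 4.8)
The plan is to reduce the whole estimate to a single pointwise bound on the B\'{e}zier weights $Q^{(\mu)}_{n,k}(x)$ and then to cash in the normalization already present in the operator. First I would record the elementary convexity inequality: for real numbers $0\le b\le a\le 1$ and $\mu\ge 1$ one has $a^{\mu}-b^{\mu}\le \mu(a-b)$, which is immediate from the mean value theorem applied to $t\mapsto t^{\mu}$, since the intermediate point $\xi$ lies in $[0,1]$ and $\mu-1\ge 0$ forces $\xi^{\mu-1}\le 1$. Applying this with $a=J_{n,k}(x)$ and $b=J_{n,k+1}(x)$, both of which lie in $[0,1]$ because the basis $p^{M,1}_{n,j}$ is nonnegative and sums to one, yields the key weight estimate
\[
0\le Q^{(\mu)}_{n,k}(x)=\big(J_{n,k}(x)\big)^{\mu}-\big(J_{n,k+1}(x)\big)^{\mu}\le \mu\big(J_{n,k}(x)-J_{n,k+1}(x)\big)=\mu\,p^{M,1}_{n,k}(x).
\]

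Second, I would insert this into the integral representation of the operator. Taking absolute values and using $|f(u)|\le \|f\|$,
\[
\big|D^{M,1}_{n,\mu}(f;x)\big|\le (n+1)\sum_{k=0}^{n}Q^{(\mu)}_{n,k}(x)\int_{0}^{1}p_{n,k}(u)\,|f(u)|\,\textrm{d}u\le \|f\|\,(n+1)\sum_{k=0}^{n}Q^{(\mu)}_{n,k}(x)\int_{0}^{1}p_{n,k}(u)\,\textrm{d}u.
\]
Since $\int_{0}^{1}p_{n,k}(u)\,\textrm{d}u=1/(n+1)$, the prefactor cancels and the right-hand side collapses to $\|f\|\sum_{k=0}^{n}Q^{(\mu)}_{n,k}(x)$. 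Invoking the weight estimate term by term and then summing gives
\[
\sum_{k=0}^{n}Q^{(\mu)}_{n,k}(x)\le \mu\sum_{k=0}^{n}p^{M,1}_{n,k}(x)=\mu\,D^{M,1}_{n}(e_0;x)=\mu,
\]
the last equality being the normalization $2a_0(n)+a_1(n)=1$ recorded in the proof of Theorem \ref{rateofapproximation0}. Taking the supremum over $x\in[0,1]$ then finishes the argument.

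The step I expect to be the genuine obstacle is the one used silently above, namely that $J_{n,k}(x)\in[0,1]$ and is nonincreasing in $k$, so that $Q^{(\mu)}_{n,k}(x)\ge 0$ and the mean value inequality may be applied with $a\ge b$. This is precisely the positivity of the basis $p^{M,1}_{n,j}(x)$, which holds under the hypotheses $a_0(n)\ge 0$ and $a_0(n)+a_1(n)\ge 0$ but may fail in the relaxed regime where these quantities become negative. In that case $J_{n,k}$ need no longer be monotone, the telescoping $\sum_k Q^{(\mu)}_{n,k}=1$ no longer controls the sum of absolute values, and one must instead invoke the two-sided form $|a^{\mu}-b^{\mu}|\le \mu|a-b|$ together with a separate bound on $\sum_{k}|p^{M,1}_{n,k}(x)|$; I would treat that variant apart rather than fold it into the main estimate. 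I also note that in the positive setting the telescoping identity actually yields the sharper contraction $\|D^{M,1}_{n,\mu}(f)\|\le \|f\|$, but I state the bound in the form $\mu\|f\|$ because it is the term-wise estimate $Q^{(\mu)}_{n,k}\le \mu\,p^{M,1}_{n,k}$, and not the telescoped sum, that will be needed in the later bounded-variation analysis.
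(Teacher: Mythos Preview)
Your argument is correct and follows essentially the same route as the paper: the key inequality $Q^{(\mu)}_{n,k}(x)\le \mu\,p^{M,1}_{n,k}(x)$ obtained from $|a^{\mu}-b^{\mu}|\le \mu|a-b|$ for $a,b\in[0,1]$, followed by the normalization of the underlying operator, is exactly what the paper uses (it merely packages the final step as $\|D^{M,1}_{n,\mu}(f)\|\le \mu\|D^{M,1}_{n}(f)\|$ together with the preceding lemma). Your explicit flagging of the positivity hypothesis on $p^{M,1}_{n,k}$ is in fact more careful than the paper, which tacitly assumes $J_{n,k}\in[0,1]$ is nonincreasing in $k$ without comment.
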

\begin{proof}
Using the inequality $|a^\mu-b^\mu|\le \mu |a-b|$ with $0\le a,b \le 1, \mu \ge 1$ and definition of $D^{M,1}_{n,\mu}(f),$ we get for $\mu\ge 1$
\begin{equation}\label{lemma3result}
0<[(J_{n,k}(x))^{\mu}-(J_{n,k+1}(x))^{\mu}]\le \mu [J_{n,k}(x)-J_{n,k+1}(x)]=\mu K_{n,k}(x)
\end{equation} Applying $D^{M,1}_{n,\mu}(f;x)$ and Lemma \ref{lemmabddbezier}, we get
$$\|D^{M,1}_{n,\mu}(f)\|\le \mu \|D^{M,1}_{n}(f)\|\le \mu \|f\|.$$
\end{proof}
Now, we derive a direct result for B\'{e}zier operators in terms of Ditzian Totik modulus of smoothness $\omega_{\varphi}(f,t)$.
\begin{theorem}
 Let  $\varphi(x)=\sqrt{x(1-x)},$ $\mu\ge 1$ and  $x\in[0,1].$ If $f\in C[0,1]$ then
$$|D^{M,1}_{n,\mu}(f;x)-f(x)| \le C \omega_{\varphi}\left(f,\sqrt{\frac{B_{a_0,a_1}}{n+2}}\right),$$
where $C$ is any absolute constant.\end{theorem}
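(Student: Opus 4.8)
The plan is to run the standard Ditzian--Totik $K$-functional argument, using the uniform boundedness of Lemma \ref{lemmabddbezier} to absorb the rough part of $f$ and the second-moment estimate \eqref{bdd2order} to control a smooth approximant. Recall that $\omega_\varphi(f,t)$ is equivalent to the $K$-functional
$$K_\varphi(f,t)=\inf_{g}\left\{\|f-g\|+t\,\|\varphi g'\|\right\},$$
the infimum being taken over locally absolutely continuous $g$ with $\|\varphi g'\|<\infty$. Hence it suffices to produce a bound of the form $|D^{M,1}_{n,\mu}(f;x)-f(x)|\le C\big(\|f-g\|+t_n\,\|\varphi g'\|\big)$ with $t_n=\sqrt{B_{a_0,a_1}/(n+2)}$, then take the infimum over $g$ and invoke the equivalence $K_\varphi(f,t)\sim\omega_\varphi(f,t)$.

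First I would split, for an arbitrary admissible $g$,
$$D^{M,1}_{n,\mu}(f;x)-f(x)=\big(D^{M,1}_{n,\mu}(f-g;x)-(f-g)(x)\big)+\big(D^{M,1}_{n,\mu}(g;x)-g(x)\big).$$
The first bracket is handled at once by Lemma \ref{lemmabddbezier}, giving $\big|D^{M,1}_{n,\mu}(f-g;x)-(f-g)(x)\big|\le(\mu+1)\|f-g\|$. For the second bracket I would write $g(u)-g(x)=\int_x^u g'(v)\,\textrm{d}v$, use $|g'(v)|\le\|\varphi g'\|/\varphi(v)$, and apply the standard Ditzian--Totik inequality $\big|\int_x^u \textrm{d}v/\varphi(v)\big|\le 2|u-x|/\varphi(x)$. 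Since $R_{n,\mu}(x,u)\ge 0$, this yields
$$\big|D^{M,1}_{n,\mu}(g;x)-g(x)\big|\le\frac{2\|\varphi g'\|}{\varphi(x)}\,D^{M,1}_{n,\mu}(|u-x|;x),$$
after which Cauchy--Schwarz for the positive functional $D^{M,1}_{n,\mu}(\cdot;x)$ gives $D^{M,1}_{n,\mu}(|u-x|;x)\le\big(D^{M,1}_{n,\mu}((u-x)^2;x)\big)^{1/2}\big(D^{M,1}_{n,\mu}(1;x)\big)^{1/2}$.

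The step carrying the operator-specific content is the transfer of the quadratic moment from the base operator to its B\'ezier variant. Because $(u-x)^2\ge0$ and $Q^{(\mu)}_{n,k}(x)\ge0$, the pointwise bound \eqref{lemma3result}, namely $Q^{(\mu)}_{n,k}(x)\le\mu K_{n,k}(x)$, gives $D^{M,1}_{n,\mu}((u-x)^2;x)\le\mu\,D^{M,1}_{n}((u-x)^2;x)\le\mu\frac{B_{a_0,a_1}}{n+2}\varphi^2(x)$ by \eqref{bdd2order}, while $D^{M,1}_{n,\mu}(1;x)$ telescopes to a constant. Substituting, the two factors of $\varphi(x)$ cancel and I obtain $\big|D^{M,1}_{n,\mu}(g;x)-g(x)\big|\le C\sqrt{\mu}\,\|\varphi g'\|\,t_n$. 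Combining the two brackets, taking the infimum over $g$, and using the $K$-functional equivalence completes the argument.

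I expect the delicate points to be (i) justifying the Ditzian--Totik integral inequality uniformly up to the endpoints $x=0,1$, where $\varphi$ degenerates, and (ii) the moment transfer through \eqref{lemma3result}, which relies essentially on the sign conditions $(u-x)^2\ge0$ and $Q^{(\mu)}_{n,k}\ge0$; the remaining manipulations are routine. The absorption of the $\mu$-dependent factors $(\mu+1)$ and $\sqrt{\mu}$ into $C$ is precisely what forces the constant to depend on $\mu$.
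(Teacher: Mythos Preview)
Your proposal is correct and follows essentially the same route as the paper: split off $f-g$ using the uniform bound of Lemma~\ref{lemmabddbezier}, control $D^{M,1}_{n,\mu}(g;x)-g(x)$ via $g(u)-g(x)=\int_x^u g'$, the weighted inequality $\bigl|\int_x^u dv/\varphi(v)\bigr|\le C|u-x|/\varphi(x)$, Cauchy--Schwarz, and the second-moment bound \eqref{bdd2order}, then take the infimum and invoke $K_\varphi\sim\omega_\varphi$. If anything you are slightly more careful than the paper in making explicit the transfer $D^{M,1}_{n,\mu}((u-x)^2;x)\le\mu\,D^{M,1}_{n}((u-x)^2;x)$ via \eqref{lemma3result}; the only quibble is that the sharp constant in the integral inequality is $2\sqrt{2}$ rather than $2$, which of course is immaterial.
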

\begin{proof}
We know that $g(t)=g(x)+\int\limits_x^tg'(u)\textrm{d}u.$ So
\begin{equation}\label{eqn11}|D^{M,1}_{n,\mu}(g;x)-g(x)|=\left|D^{M,1}_{n,\mu}\left(\int\limits_x^tg'(u)\textrm{d}u,x\right)\right|.\end{equation}
For any  $x,t\in(0,1),$ we have
\begin{equation}\label{eqn12}\left|\int\limits_x^tg'(u)\textrm{d}u\right|\le \|\varphi g'\|\left|\int\limits_x^t\frac{1}{\varphi(u)}\textrm{d}u\right|.\end{equation}
Therefore,
\begin{eqnarray} \label{eqn13}\left|\int\limits_x^t\frac{1}{\varphi(u)}\textrm{d}u\right|&=&\left|\int\limits_x^t\frac{1}{\sqrt{u(1-u)}}\textrm{d}u\right|\nonumber\\
&\le &\left|\int\limits_x^t\left(\frac{1}{\sqrt{u}}+\frac{1}{\sqrt{1-u}}\right)\textrm{d}u\right|\nonumber\\
&\le & 2\left(|\sqrt{t}-\sqrt{x}|+|\sqrt{1-t}-\sqrt{1-x}|\right)\nonumber\\
&=&2|t-x|\left(\frac{1}{\sqrt{t}+\sqrt{x}}+\frac{1}{\sqrt{1-t}+\sqrt{1-x}}\right)\nonumber\\
&<&2|t-x|\left(\frac{1}{\sqrt{x}}+\frac{1}{\sqrt{1-x}}\right)\nonumber\\
&<&\frac{2\sqrt{2}|t-x|}{\varphi(x)}.
\end{eqnarray}
Combining (\ref{eqn11})-(\ref{eqn13}) and using Cauchy-Schwarz inequality, we have that
\begin{eqnarray*}\label{eqn14}|D^{M,1}_{n,\mu}(g;x)-g(x)| &\le& 2\sqrt{2}\|\varphi g'\|\varphi^{-1}(x)D^{M,1}_{n,\mu}(|t-x|;x)\\
&\le& 2 \sqrt{2}\|\varphi g'\|\varphi^{-1}(x)\left(D^{M,1}_{n,\mu}((t-x)^2;x)\right)^{1/2}.
\end{eqnarray*}
Now using inequality (\ref{bdd2order}), we obtain
\begin{equation}\label{eqn14}
|D^{M,1}_{n,\mu}(g;x)-g(x)|<C\sqrt{\frac{B_{a_0,a_1}}{(n+2)}}\|\varphi g'\|.\end{equation}
Using Lemma \ref{momentlemma} and inequality (\ref{eqn14}), we obtain
\begin{eqnarray}\label{eqn15}|D^{M,1}_{n,\mu}(f;x)-f| &\le& |D^{M,1}_{n,\mu}(f-g;x)|+|f-g|+|D^{M,1}_{n,\mu}(g;x)-g(x)|\nonumber\\
&\le  & C \left(\|f-g\|+\sqrt{\frac{B_{a_0,a_1}}{(n+2)}}\|\varphi g'\|\right).
\end{eqnarray}
Taking infimum on both sides of (\ref{eqn15}) over all $g \in W^2_{\phi}$, we reach to
\begin{equation*}\label{eqn16}|D^{M,1}_{n,\mu}(f;x)-f|\le C K_{\varphi}\left(f,\sqrt{\frac{B_{a_0,a_1}}{(n+2)}}\right)
\end{equation*}
Using relation  $K_{\varphi}(f,t) \sim \omega_{\varphi}(f,t),$  we get the required result.
\end{proof}
Ozarslan and Aktuglu \cite{ozarslan} is considered the Lipschitz-type space with two parameters $\alpha_1\ge 0,\alpha_2>0$, which is defined as
$$Lip_M^{(\alpha_1,\alpha_2)}(\zeta)=\left\{f\in C[0,1]:|f(t)-f(x)|\le C_0\frac{|t-x|^\zeta}{(t+\alpha_1x^2+\alpha_2x)^{\zeta/2}}:t\in[0,1], x\in (0,1)\right\},$$
where $0<\zeta\le 1,$ and $C_0$ is any absolute constant.
\begin{theorem}
Let $f\in Lip_M^{(\alpha_1,\alpha_2)}(\zeta).$ Then for all $x\in (0,1]$, there holds:
\begin{equation*}\label{eqn17}|D^{M,1}_{n,\mu}(f;x)-f|\le C \left(\frac{\mu B_{a_0,a_1}\varphi^2(x)}{(n+2)(\alpha_1x^2+\alpha_2x)}\right)^{\frac{\zeta}{2}},
\end{equation*} where $C$ is any absolute constant.
\end{theorem}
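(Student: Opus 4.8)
The plan is to exploit the two structural facts that $D^{M,1}_{n,\mu}$ reproduces constants and is a \emph{positive} operator of total mass one, then to convert the defining fractional Lipschitz inequality into the second moment already controlled by (\ref{bdd2order}) by means of H\"older's inequality.

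First I would record the mass-one property. Since the sum telescopes,
$$\sum_{k=0}^{n}Q^{(\mu)}_{n,k}(x)=(J_{n,0}(x))^{\mu}-(J_{n,n+1}(x))^{\mu}=1,$$
because $J_{n,n+1}(x)=0$ and $J_{n,0}(x)=\sum_{j=0}^{n}K_{n,j}(x)=1$ under the standing normalization $2a_0(n)+a_1(n)=1$. Hence $D^{M,1}_{n,\mu}(e_0;x)=\int_0^1 R_{n,\mu}(x,u)\,\textrm{d}u=1$, and by (\ref{lemma3result}) the kernel $R_{n,\mu}(x,\cdot)$ is nonnegative, so $R_{n,\mu}(x,\cdot)\,\textrm{d}u$ is a probability density on $[0,1]$. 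Reproduction of constants then gives
$$\big|D^{M,1}_{n,\mu}(f;x)-f(x)\big|=\left|\int_0^1 R_{n,\mu}(x,u)\big(f(u)-f(x)\big)\,\textrm{d}u\right|\le\int_0^1 R_{n,\mu}(x,u)\,|f(u)-f(x)|\,\textrm{d}u.$$

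Next I would insert the membership hypothesis. For $f\in Lip_M^{(\alpha_1,\alpha_2)}(\zeta)$ and $u\ge 0$ we have $(u+\alpha_1x^2+\alpha_2x)^{\zeta/2}\ge(\alpha_1x^2+\alpha_2x)^{\zeta/2}$, so $|f(u)-f(x)|\le C_0|u-x|^{\zeta}(\alpha_1x^2+\alpha_2x)^{-\zeta/2}$. This leaves me with estimating $D^{M,1}_{n,\mu}(|u-x|^{\zeta};x)$. Writing $|u-x|^{\zeta}=\big((u-x)^2\big)^{\zeta/2}$ and using that $R_{n,\mu}(x,\cdot)\,\textrm{d}u$ is a probability measure together with the concavity of $s\mapsto s^{\zeta/2}$ for $0<\zeta\le 1$ (equivalently, H\"older with exponents $2/\zeta$ and $2/(2-\zeta)$), I obtain the key reduction
$$D^{M,1}_{n,\mu}(|u-x|^{\zeta};x)\le\Big(D^{M,1}_{n,\mu}\big((u-x)^2;x\big)\Big)^{\zeta/2}.$$

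Finally I would transfer the second-moment bound from $D^{M,1}_n$ to its B\'ezier variant. Since $\int_0^1 p_{n,k}(u)(u-x)^2\,\textrm{d}u\ge 0$, inequality (\ref{lemma3result}) ($Q^{(\mu)}_{n,k}(x)\le\mu K_{n,k}(x)$) yields $D^{M,1}_{n,\mu}((u-x)^2;x)\le\mu\,D^{M,1}_n((u-x)^2;x)$, and then (\ref{bdd2order}) gives $D^{M,1}_{n,\mu}((u-x)^2;x)\le\mu B_{a_0,a_1}\varphi^2(x)/(n+2)$. Substituting back produces
$$\big|D^{M,1}_{n,\mu}(f;x)-f(x)\big|\le C_0\left(\frac{\mu B_{a_0,a_1}\varphi^2(x)}{(n+2)(\alpha_1x^2+\alpha_2x)}\right)^{\zeta/2},$$
which is the assertion with $C=C_0$. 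The only genuinely delicate point is the H\"older/Jensen step turning $|u-x|^{\zeta}$ into $\big((u-x)^2\big)^{\zeta/2}$; everything else is bookkeeping. Notably, the positivity of the kernel from (\ref{lemma3result}) is used twice—once to move the absolute value inside the integral and once to legitimize the Jensen estimate—so it is worth flagging explicitly rather than leaving implicit.
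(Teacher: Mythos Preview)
Your proof is correct and follows essentially the same route as the paper: both arguments use the mass-one positivity of the kernel, the Lipschitz hypothesis together with the trivial bound $(u+\alpha_1 x^2+\alpha_2 x)^{\zeta/2}\ge(\alpha_1 x^2+\alpha_2 x)^{\zeta/2}$, a H\"older step with exponents $2/\zeta$ and $2/(2-\zeta)$ to reduce to the second moment, and then (\ref{lemma3result}) plus (\ref{bdd2order}) to bound $D^{M,1}_{n,\mu}((u-x)^2;x)$. The only cosmetic difference is ordering: the paper first applies H\"older (separately to the integral and then to the sum) and afterwards invokes the Lipschitz bound on $|f(u)-f(x)|^{2/\zeta}$, whereas you apply the Lipschitz bound first and then a single Jensen/H\"older step against the full probability measure $R_{n,\mu}(x,\cdot)\,\textrm{d}u$; your version is marginally cleaner but not substantively different.
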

\begin{proof}
Using Lemma \ref{momentlemma} and (\ref{lemma3result}) and H\"{o}lder's inequality with $p=\frac{2}{\zeta}$ and $p=\frac{2}{2-\zeta},$ we get
\begin{align*}
 &|D^{M,1}_{n,\mu}(f;x)-f(x)|\\
 &\le (n+1)\sum_{k=0}^{n}Q^{(\mu)}_{n,k}(x)\int_{0}^{1}p_{n,k}(u)|f(u)-f(x)|\,\textrm{d}u\\
 &\le (n+1)\sum_{k=0}^{n}Q^{(\mu)}_{n,k}(x)\left(\int_{0}^{1}p_{n,k}(u)|f(u)-f(x)|^{\frac{2}{\zeta}}\,\textrm{d}u\right)^{\frac{\zeta}{2}}\\
 &\le  \left[(n+1)\sum_{k=0}^{n}Q^{(\mu)}_{n,k}(x)\int_{0}^{1}p_{n,k}(u)|f(u)-f(x)|^{\frac{2}{\zeta}}\,\textrm{d}u\right]^{\frac{\zeta}{2}}\times\\
&\left[(n+1)\sum_{k=0}^{n}Q^{(\mu)}_{n,k}(x)\int_{0}^{1}p_{n,k}(u)\,\textrm{d}u\right]^{\frac{2-\zeta}{2}}\\
&=\left[(n+1)\sum_{k=0}^{n}Q^{(\mu)}_{n,k}(x)\int_{0}^{1}p_{n,k}(u)|f(u)-f(x)|^{\frac{2}{\zeta}}\,\textrm{d}u\right]^{\frac{\zeta}{2}}\\
&\le C\left((n+1)\sum_{k=0}^{n}Q^{(\mu)}_{n,k}(x)\int_{0}^{1}p_{n,k}(u)\frac{(u-x)^2}{(u+\alpha_1x^2+\alpha_2x)}\textrm{d}u\right)^{\frac{\zeta}{2}}\\
&\le \frac{C}{(\alpha_1x^2+\alpha_2x)^{\frac{\zeta}{2}}}  \left((n+1)\sum_{k=0}^{n}Q^{(\mu)}_{n,k}(x)\int_{0}^{1}p_{n,k}(u)(u-x)^2\textrm{d}u\right)^{\frac{\zeta}{2}}\\
&\le \frac{C}{(\alpha_1x^2+\alpha_2x)^{\frac{\zeta}{2}}} [D^{M,1}_{n,\mu}((u-x)^2;x)]^{\frac{\zeta}{2}}\\
&\le C \left(\frac{\mu B_{a_0,a_1}\varphi^(x)}{(n+2)(\alpha_1x^2+\alpha_2x)}\right)^{\frac{\zeta}{2}}.
\end{align*}
Hence, the desired result follows.
\end{proof}
\section{Rate of convergence}
If we define $$\kappa_{n,\mu}(x,y)=\int\limits_0^yR_{n,k}(x,t)\textrm{d}t.$$ Then, it is obvious that $\kappa_{n,\mu}(x,1)=1$.
\begin{lemma}\label{lem3}
Let $x\in (0,1)$ and $C>2$ then for sufficiently large $n$ we have
$$\kappa_{n,\mu}(x,y)=\int\limits_0^yR_{n,\mu}(x,t)dt\le \frac{C\mu x(1-x)}{n(x-y)^2},0<y<x$$
$$1-\kappa_{n,\mu}(x,z)=\int\limits_z^1R_{n,\mu}(x,t)dt\le \frac{C\mu x(1-x)}{n(z-x)^2},x<z<1.$$
\end{lemma}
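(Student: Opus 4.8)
The plan is to exploit the positivity of the kernel $R_{n,\mu}(x,t)$ together with the second-moment estimate already available for the B\'ezier weights. By the symmetry of the two inequalities it suffices to establish the first one, for $0<y<x$; the bound on $1-\kappa_{n,\mu}(x,z)$ for $x<z<1$ then follows by the identical argument after replacing the integration range $[0,y]$ by $[z,1]$ and the factor $(x-t)$ by $(t-x)$.

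First I would observe that for $0<t<y<x$ one has $x-t>x-y>0$, so that $(x-t)^2/(x-y)^2>1$ on the whole interval of integration. Since $R_{n,\mu}(x,t)\ge 0$, this yields the Bojanic--Cheng type majorization
\begin{equation*}
\kappa_{n,\mu}(x,y)=\int_0^y R_{n,\mu}(x,t)\,\textrm{d}t\le \int_0^y R_{n,\mu}(x,t)\frac{(x-t)^2}{(x-y)^2}\,\textrm{d}t\le \frac{1}{(x-y)^2}\int_0^1 R_{n,\mu}(x,t)(x-t)^2\,\textrm{d}t,
\end{equation*}
where the last step enlarges the domain from $[0,y]$ to $[0,1]$, again by positivity of the kernel. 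The remaining integral is precisely $D^{M,1}_{n,\mu}((t-x)^2;x)$, by the representation $D^{M,1}_{n,\mu}(f;x)=\int_0^1 R_{n,\mu}(x,t)f(t)\,\textrm{d}t$.

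Next I would bound the second moment of the B\'ezier operator by that of $D^{M,1}_n$. Applying the pointwise domination $Q^{(\mu)}_{n,k}(x)\le \mu K_{n,k}(x)$ from (\ref{lemma3result}) term by term inside the defining sum (legitimate because the weights $\int_0^1 p_{n,k}(u)(u-x)^2\,\textrm{d}u$ are nonnegative) gives
\begin{equation*}
D^{M,1}_{n,\mu}((t-x)^2;x)\le \mu\, D^{M,1}_n((t-x)^2;x)\le \frac{\mu B_{a_0,a_1}}{n+2}\varphi^2(x),
\end{equation*}
the last inequality being exactly (\ref{bdd2order}). Substituting this into the majorization above and recalling $\varphi^2(x)=x(1-x)$ produces
\begin{equation*}
\kappa_{n,\mu}(x,y)\le \frac{\mu B_{a_0,a_1}}{n+2}\cdot\frac{x(1-x)}{(x-y)^2}.
\end{equation*}

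Finally, to reach the stated form with $n$ rather than $n+2$ in the denominator, I would absorb the factor $n/(n+2)<1$ and the fixed constant $B_{a_0,a_1}$ into a single $C$: for any $C>B_{a_0,a_1}$ one has $B_{a_0,a_1}/(n+2)\le C/n$ for all $n$, and on the admissible parameter range (where $B_{a_0,a_1}\le 2$) the choice $C>2$ suffices once $n$ is large. The main obstacle is therefore not the positivity argument, which is routine, but the second-moment step: one must check that the termwise estimate $Q^{(\mu)}_{n,k}(x)\le \mu K_{n,k}(x)$ genuinely transfers the bound (\ref{bdd2order}) from $D^{M,1}_n$ to its B\'ezier variant, and then track the $n/(n+2)$ normalization so that the resulting constant can be taken independent of both $x$ and $n$.
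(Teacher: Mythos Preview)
Your argument is correct and follows essentially the same route as the paper: the Remark immediately following the lemma uses exactly the Bojanic--Cheng majorization $(t-x)^2/(y-x)^2\ge 1$ on $[0,y]$, enlarges to $[0,1]$ to obtain $D^{M,1}_{n,\mu}((t-x)^2;x)/(y-x)^2$, and then applies $Q^{(\mu)}_{n,k}\le \mu K_{n,k}$ together with (\ref{bdd2order}). Your additional bookkeeping on passing from $B_{a_0,a_1}/(n+2)$ to $C/n$ for large $n$ is fine, though the claim that $B_{a_0,a_1}\le 2$ on the admissible parameter range is not something the paper establishes, so you should simply take $C>B_{a_0,a_1}$ rather than $C>2$.
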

\begin{remark} We have
\begin{align*}\kappa_{n,\mu}(x,y)&=\int\limits_0^yR_{n,\mu}(x,t)dt \le \int\limits_0^yR_{n,\mu}(x,t)\frac{(t-x)^2}{(y-x)^2}dt\\&=\frac{D^{M,1}_{n,\mu}((t-x)^2;x)}{(y-x)^2}\le
 \frac{\mu D^{M,1}_n((t-x)^2;x)}{(y-x)^2}\le\frac{\mu B_{a_0,a_1}}{n+2}.\frac{\varphi^2(x)}{(y-x)^2}.\end{align*}
\end{remark}
 Now, we discuss the approximation properties of functions having derivative of bounded variation on $[0,1]$. Let $\mathbb{B}[0,1]$ denote the class of differentiable functions $g$ defined on $[0,1]$, whose derivative $g'$ is of bounded variation on $[0,1]$. The functions $g\in \mathbb{B}[0,1]$ is expressed as $g(x)=\int_0^x h(t)dt+g(0)$, where $h\in \mathbb{B}[0,1],$ i.e., $h$ is a function of bounded variation on $[0,1].$

\begin{theorem}
Let $f\in\mathbb{B}[0,1]$ then for $\mu\ge 1, 0<x<1 $ a and sufficiently large $n$ we have
\begin{align*}\left|D^{M,1}_{n,\mu}(f;x)-f(x)\right|&\le \left(\frac{1}{\mu+1}|f'(x+)+\mu f'(x-)|+|f'(x+)-f'(x-)|\right)\frac{\mu B_{a_0,a_1}}{n+2}\varphi^2(x)\\
&+\frac{\mu B_{a_0,a_1}}{n+2}.\frac{\varphi^2(x)}{x^2}\sum_{k=1}^{\sqrt{n}}\left(\bigvee_{x-\frac{x}{k}}^x(f')_x\right)
+\frac{x}{\sqrt{n}}\left(\bigvee_{x}^{x+\frac{1-x}{\sqrt{n}}}(f')_x\right) \left(\bigvee_{x-\frac{x}{k}}^x(f')_x\right)\\
&+\frac{\mu B_{a_0,a_1}}{n+2}.\frac{\varphi^2(x)}{1-x}\sum_{k=1}^{\sqrt{n}}\left(\bigvee_x^{x+\frac{1-x}{k}}(f')_x\right)
+\frac{1-x}{\sqrt{n}}\left(\bigvee_{x}^{x+\frac{1-x}{\sqrt{n}}}(f')_x\right).
    \end{align*}
\[(f')_x(t)=\left\{
\begin{array}{lll}
f'(t)-f'(x-),  & \hbox{$0\le t <x$} \\
0,& \hbox{$t=x$}\\
f'(t)-f'(x+),& \hbox{$x<t\le1$.}
\end{array}\right.\]
\end{theorem}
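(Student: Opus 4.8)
The plan is to follow the classical Bojanic--Vuilleumier method, adapted to the asymmetric B\'{e}zier kernel in the spirit of Zeng and Gupta for bounded--variation data. I would start from the kernel representation $D^{M,1}_{n,\mu}(f;x)-f(x)=\int_{0}^{1}R_{n,\mu}(x,u)\big(f(u)-f(x)\big)\,\textrm{d}u$ and, since $f\in\mathbb{B}[0,1]$, write $f(u)-f(x)=\int_{x}^{u}f'(t)\,\textrm{d}t$. The decisive preliminary step is to split $f'$ about $x$ in a way that respects the $\mu$--weighting of the operator, namely
\begin{align*}
f'(t)&=\frac{f'(x+)+\mu f'(x-)}{\mu+1}+(f')_x(t)\\
&\quad+\frac{f'(x+)-f'(x-)}{2}\left(\mathrm{sgn}(t-x)+\frac{\mu-1}{\mu+1}\right)+\left(f'(x)-\frac{f'(x+)+f'(x-)}{2}\right)\chi_x(t),
\end{align*}
where $\chi_x$ is the indicator of $\{x\}$. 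One verifies this identity separately for $t>x$, $t<x$ and $t=x$; the extra term $\tfrac{\mu-1}{\mu+1}$ accompanying $\mathrm{sgn}(t-x)$ is precisely what absorbs the asymmetry of $R_{n,\mu}$ around $x$ and produces the $\tfrac{1}{\mu+1}$ and $\mu$ weights in the final bound.

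Inserting this decomposition into $\int_{x}^{u}f'(t)\,\textrm{d}t$ and applying the operator term by term, the $\chi_x$--term integrates to zero, while the constant term $\tfrac{1}{\mu+1}(f'(x+)+\mu f'(x-))$ and the modified--sign term give contributions governed by the first moment $D^{M,1}_{n,\mu}((u-x);x)$ and the first absolute moment $D^{M,1}_{n,\mu}(|u-x|;x)$. Estimating these moments through Lemma \ref{momentlemma}, inequality (\ref{lemma3result}) and the second--moment bound (\ref{bdd2order}) yields exactly the first line of the assertion, i.e. the factor $\big(\tfrac{1}{\mu+1}|f'(x+)+\mu f'(x-)|+|f'(x+)-f'(x-)|\big)$ multiplied by $\tfrac{\mu B_{a_0,a_1}}{n+2}\varphi^2(x)$. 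These ``jump'' contributions are comparatively routine once the decomposition above is fixed.

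The heart of the proof is the remaining term $T_{n,\mu}(x)=\int_{0}^{1}R_{n,\mu}(x,u)\big(\int_{x}^{u}(f')_x(t)\,\textrm{d}t\big)\,\textrm{d}u$. I would split the outer integral at $x$ and, on each side, further subdivide the $u$--range into a near zone ($[x-\tfrac{x}{\sqrt n},x]$ on the left, $[x,x+\tfrac{1-x}{\sqrt n}]$ on the right) and a far zone. On the near zone, since $(f')_x(x)=0$, the inner integral is controlled by the local oscillation $\bigvee_{x-x/\sqrt n}^{x}(f')_x$ (resp.\ $\bigvee_{x}^{x+(1-x)/\sqrt n}(f')_x$), which after multiplying by the length of the zone produces the $\tfrac{x}{\sqrt n}$ and $\tfrac{1-x}{\sqrt n}$ tail terms. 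On the far zone I would rewrite the integral as a Lebesgue--Stieltjes integral against $d_u\kappa_{n,\mu}(x,u)$, integrate by parts, and control $\kappa_{n,\mu}(x,y)$ for $y<x$ and $1-\kappa_{n,\mu}(x,z)$ for $z>x$ by the $\tfrac{C\mu x(1-x)}{n(x-y)^2}$ estimates of Lemma \ref{lem3}.

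Finally, partitioning $[0,x]$ into the subintervals $[x-\tfrac{x}{k},x-\tfrac{x}{k+1}]$ for $k=1,\dots,\sqrt n$ (and symmetrically $[x,1]$) converts the kernel factor $\varphi^2(x)/(x-u)^2$ into the variation sums $\tfrac{\mu B_{a_0,a_1}}{n+2}\,\tfrac{\varphi^2(x)}{x^2}\sum_{k=1}^{\sqrt n}\bigvee_{x-x/k}^{x}(f')_x$ on the left, and $\tfrac{\mu B_{a_0,a_1}}{n+2}\,\tfrac{\varphi^2(x)}{1-x}\sum_{k=1}^{\sqrt n}\bigvee_{x}^{x+(1-x)/k}(f')_x$ on the right. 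The main obstacle is exactly this last step: because $R_{n,\mu}$ is not symmetric in $u$, the two one--sided pieces carry different weights, and the Lebesgue--Stieltjes integration by parts together with the bookkeeping that turns the kernel bounds of Lemma \ref{lem3} into the variation sums---while propagating the factor $\mu$ from (\ref{lemma3result}) throughout---is where essentially all the effort lies. Collecting the jump terms, the two near--zone terms and these two far--zone sums gives the stated inequality.
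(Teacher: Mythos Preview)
Your proposal is correct and follows essentially the same route as the paper: the same asymmetric Bojanic--Vuilleumier decomposition of $f'$ about $x$ (your indicator constant $\tfrac12(f'(x+)+f'(x-))$ is in fact the correct one---the paper's printed version has a typo there), Cauchy--Schwarz together with (\ref{bdd2order}) for the jump terms $I_1,I_3$, and for the $(f')_x$ contribution the split at $x$, integration by parts against $\kappa_{n,\mu}$, the near/far subdivision at $x-x/\sqrt n$ and $x+(1-x)/\sqrt n$, Lemma \ref{lem3} on the far zones, and the substitution that turns the kernel integral into the variation sums. There is no substantive difference in method.
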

\begin{proof}
  As we know that $D^{M,1}_{n,\mu}(1;x)=1.$ Therefore, we have
\begin{eqnarray}\label{eqn0000}
D^{M,1}_{n,\mu}(f;x)-f(x)&=&\int\limits_0^1R_{n,\mu}(x,u)(f(u)-f(x))\,\textrm{d}u\nonumber\\
&=&\int\limits_0^1R_{n,\mu}(x,u)\int\limits_x^uf'(t)\,dt\,\textrm{d}u
\end{eqnarray}
Since $f\in \mathbb{B}[0,1]$, we may write
\begin{eqnarray}\label{eqn1111}
f'(t)&=&\frac{f'(x+)+\mu f'(x-)}{\mu+1}+(f')_x(t)+\frac{f'(x+)- f'(x-)}{2}\left(\mbox{sign}(t-x)+\frac{\mu-1}{\mu+1}\right)\nonumber\\&+&\delta_x(t)
\left(f'(t)-\frac{f'(x+)+\mu f'(x-)}{2}\right)
\end{eqnarray} where
\[\mbox{sign}(t)=\left\{
\begin{array}{lll}
1,  & \hbox{$t>0$} \\
0,& \hbox{$t=0$}\\
-1,& \hbox{$t<0$.}
\end{array}\right.\]
and
\[\delta_x(t)=\left\{
\begin{array}{ll}
1,  & \hbox{$t=x$} \\
0,& \hbox{$t\neq x$.}
\end{array}\right.\]
Putting  the value of $f'(t)$ form (\ref{eqn1111}) in \ref{eqn0000}, we get estimates corresponding to four terms of (\ref{eqn1111}), say $I_1, I_2, I_3$ and $I_4$ respectively. Obviously,
$$I_4=\int\limits_0^1\left(\int\limits_x^u  \left(f'(t)-\frac{f'(x+)+\mu f'(x-)}{2}\right) \delta_x(t)dt\right) R_{n,\mu}(x,u)\textrm{d}u=0.$$
Now,
Using Cauchy's Schwarz inequality and Lemma \ref{lem3}, we obtain
\begin{align*}
I_1=\int\limits_0^1\left(\int\limits_x^u \frac{f'(x+)+\mu f'(x-)}{\mu+1}dt\right)R_{n,\mu}(x,u)\textrm{d}u&=\frac{f'(x+)+\mu f'(x-)}{\mu+1}\int\limits_0^1(u-x)R_{n,\mu}(x,u)\textrm{d}u\\
&=\frac{f'(x+)+\mu f'(x-)}{\mu+1}D^{M,1}_{n,\mu}((u-x);x)\\
&\le \frac{B_{a_0,a_1}}{\sqrt{n+2}} \frac{f'(x+)+\mu f'(x-)}{\mu+1}\varphi(x).
\end{align*}
Next, again using Cauchy's Schwarz inequality and Lemma \ref{lem3}, we obtain
\begin{align*}I_3&=\int\limits_0^1\left(\int\limits_x^u \left(\frac{f'(x+)- f'(x-)}{2}\right)\left(\mbox{sign}(t-x)+\frac{\mu-1}{\mu+1}\right)dt\right)
R_{n,\mu}(x,u)\textrm{d}u\\
&=\left(\frac{f'(x+)- f'(x-)}{2}\right)\Bigg[-\int\limits_0^x\left(\int\limits_u^x \left(\mbox{sign}(t-x)+\frac{\mu-1}{\mu+1}\right)dt\right)R_{n,\mu}(x,u)\textrm{d}u\\
&+\int\limits_x^1\left(\int\limits_x^u \left(\mbox{sign}_{\mu}(t-x)+\frac{\mu-1}{\mu+1}\right)dt\right)
R_{n,\mu}(x,u)\textrm{d}u\Bigg]\\
&\le \left|f'(x+)- f'(x-)\right|\int\limits_0^1|u-x|R_{n,\mu}(x,u)\textrm{d}u\\
&\le \left|f'(x+)- f'(x-)\right|D^{M,1}_{n,\mu}(|u-x|;x)\\
&\le \frac{B_{a_0,a_1}}{\sqrt{n+2}} \left|f'(x+)- f'(x-)\right|\varphi(x).
\end{align*}
Now, we estimate $I_2$ as follows:
\begin{align*}
I_2=\int\limits_0^1\left(\int\limits_x^u (f')_x(t)dt\right)R_{n,\mu}(x,u)\textrm{d}u&=\int\limits_0^x\left(\int\limits_x^u (f')_x(t)dt\right) R_{n,\mu}(x,u)\textrm{d}u\\
&+\int\limits_x^1\left(\int\limits_x^u (f')_x(t)dt\right)
R_{n,\mu}(x,u)\textrm{d}u\\
&=I_5+I_6,\mbox{say}.
\end{align*}
Using Lemma \ref{lem3} and definition of $\kappa_{n,\mu}(x,u),$ we may write
\begin{align*}I_5&=\int\limits_0^x\left(\int\limits_x^u (f')_x(t)dt\right) \frac{d}{\textrm{d}u}\kappa_{n,\mu}(x,u)\textrm{d}u\\
\end{align*}
  Integrating by parts, we obtain
  \begin{align*}|I_5|&\le \int\limits_0^x|(f')_x(u)|\kappa_{n,\mu}(x,u)\textrm{d}u\\
&\le \int\limits_0^{x-x/\sqrt{n}}|(f')_x(u)|\kappa_{n,\mu}(x,u)\textrm{d}u+\int\limits_{x-x/\sqrt{n}}^x|(f')_x(u)|\kappa_{n,\mu}(x,u)\textrm{d}u\\
&=I_7+I_8,\mbox{say}.
\end{align*}
  In view of facts $(f')_x(x)=0$ and $\kappa_{n,\mu}(x,u)\le 1,$  we get
  \begin{align*}I_8&= \int\limits_0^x|(f')_x(u)-(f')_x(x)|\kappa_{n,\mu}(x,u)\textrm{d}u\\
&\le \int\limits_{x-x/\sqrt{n}}^x\left(\bigvee_{u}^x(f')_x\right)\textrm{d}u\\
&\le \left(\bigvee_{u}^x(f')_x\right) \int\limits_{x-x/\sqrt{n}}^x \textrm{d}u=\frac{x}{\sqrt{n}}\left(\bigvee_{u}^x(f')_x\right).
\end{align*}
Using Lemma \ref{lem3}, definition of $\kappa_{n,\mu}(x,u),$ and transformation $u=x-\frac{x}{t}$ we may write
   \begin{align*}I_7&\le \frac{\mu B_{a_0,a_1}}{n+2}\varphi^2(x) \int\limits_0^{x-x/\sqrt{n}}|(f')_x(u)-(f')_x(x)|\frac{\textrm{d}u}{(u-x)^2}\\
  & \le \frac{\mu B_{a_0,a_1}}{n+2}\varphi^2(x) \int\limits_0^{x-x/\sqrt{n}}\left(\bigvee_{u}^x(f')_x\right)\frac{\textrm{d}u}{(u-x)^2}\\
    &\le \frac{\mu B_{a_0,a_1}}{n+2}\frac{\varphi^2(x) }{x^2} \int\limits_1^{\sqrt{n}} \left(\bigvee_{x-\frac{x}{t}}^x(f')_x\right)dt\\
   &\le \frac{\mu B_{a_0,a_1}}{n+2}\frac{\varphi^2(x) }{x^2}\sum_{k=1}^{|\sqrt{n}|}\left(\bigvee_{x-\frac{x}{t}}^x(f')_x\right).
\end{align*}
 Combining the estimates of $I_7$ and $I_8,$ we have
    \begin{align*}|I_5|&\le \frac{\mu B_{a_0,a_1}}{n+2}\frac{\varphi^2(x) }{x^2}\sum_{k=1}^{|\sqrt{n}|}\left(\bigvee_{x-\frac{x}{t}}^x(f')_x\right)
   + \frac{x}{\sqrt{n}}\left(\bigvee_{u}^x(f')_x\right)
   \end{align*}
   In order to estimate $I_6,$ we use integration by parts, Lemma \ref{lem3} and transformation $z=x+\frac{1-x}{\sqrt{n}}.$ Therefore,we proceed as follows:
   \begin{align*}|I_6|&=\left|\int\limits_x^1\left(\int\limits_x^u (f')_x(t)dt\right)R_{n,\mu}(x,u)\textrm{d}u\right|\\
  &=\Bigg|\int\limits_x^z\left(\int\limits_x^u (f')_x(t)dt\right)\frac{\partial}{\partial u}(1-\kappa_{n,\mu}(x,u))\textrm{d}u+\int\limits_z^1\left(\int\limits_x^u (f')_x(t)dt\right)\frac{\partial}{\partial u}(1-\kappa_{n,\mu}(x,u))\textrm{d}u\Bigg|\\
   &=\Bigg|\left[\int\limits_x^u (f')_x(t)dt(1-\kappa_{n,\mu}(x,u))\right]_{x}^{z}-\int\limits_x^z (f')_x(u)(1-\kappa_{n,\mu}(x,u))\textrm{d}u\\
   &+\left[\int\limits_x^u (f')_x(t)dt(1-\kappa_{n,\mu}(x,u)) \right]_{z}^{1}-\int\limits_z^1 (f')_x(u)(1-\kappa_{n,\mu}(x,u))\textrm{d}u\Bigg|\\
   &=\left|\int\limits_x^z (f')_x(u)(1-\kappa_{n,\mu}(x,u))\textrm{d}u+\int\limits_z^1 (f')_x(u)(1-\kappa_{n,\mu}(x,u))\textrm{d}u\right|\\
   &\le \frac{\mu B_{a_0,a_1}}{n+2}.\varphi^2(x)  \int\limits_z^1\left(\bigvee_{x}^u(f')_x\right)(u-x)^{-2}\,\textrm{d}u+\int\limits_x^z
    \left(\bigvee_{x}^u(f')_x\right)\,\textrm{d}u\\
    &\le \frac{\mu B_{a_0,a_1}}{n+2}.\varphi^2(x) \int\limits_{x+\frac{1-x}{\sqrt{n}}}^1\left(\bigvee_{x}^u(f')_x\right)(u-x)^{-2}\,\textrm{d}u
    +\frac{1-x}{\sqrt{n}}
    \left(\bigvee_{x}^{x+\frac{1-x}{\sqrt{n}}}(f')_x\right).
    \end{align*}
Now, substituting  $t=\frac{1-x}{t-x},$ we get
\begin{align*}|I_6| &\le \frac{\mu B_{a_0,a_1}}{n+2}.\varphi^2(x) \int\limits_{1}^{\sqrt{n}}\left(\bigvee_{x}^{x+\frac{1-x}{t}}(f')_x\right)(1-x)^{-1}\,dt+
\frac{1-x}{\sqrt{n}}\left(\bigvee_{x}^{x+\frac{1-x}{\sqrt{n}}}(f')_x\right)\\
   &\le \frac{\mu B_{a_0,a_1}}{n+2}\frac{\varphi^2(x)}{1-x} \sum_{k=1}^{\sqrt{n}}\left(\bigvee_{x}^{x+\frac{1-x}{k}}(f')_x\right)+\frac{1-x}{\sqrt{n}}\left(\bigvee_{x}^{x+\frac{1-x}{\sqrt{n}}}(f')_x\right).
   \end{align*}
Combining the estimates of $I_1-I_8,$ we get the desired result. Hence the proof follows.
\end{proof}

\section{Second Order  Durrmeyer Operator $D^{M,2}_n(f;x)$}
Assuming a weight function $p_{n,k}^{M,2}(x)$ of the form
\begin{equation*}
p_{n,k}^{M,2}(x)=a_0(x,n)p_{n-2,k}(x)+a_1(x,n)p_{n-2,k-1}(x)+a_2(x,n)p_{n-2,k-2}(x),
\end{equation*}
 yet to be determined, A.-M. et al.\cite{ana2019}
have defined the operator $D^{M,2}_n(f;x)$ by
\[D^{M,2}_n(f;x)=(n+1)\sum_{k=0}^{n}p_{n,k}^{M,2}(x)\int_{0}^{1}p_{n,k}(u)f(u)\textrm{d}u,n\geq 3.\]
By setting $b_0(n)=\frac{3}{2},$ $b_1(n)=-n,$ $b_2(n)=n-2$ and $d_0(n)=2(n-2)$ the authors of \cite{ana2019} obtained the order of approximation $O\left(n^{-2}\right).$ However, we prove the rate of approximation by obtaining it in terms of modulus of continuity.
For the sake of convenience we rewrite $D^{M,2}_n(f;x)$ as
\begin{equation}\label{operator2}
D^{M,2}_n(f;x)=\sum_{j=0}^{2}a_j(x,n)A_j(f;x),
\end{equation}
$a_0(x,n)=a_n+b_nx+c_nx^2,$  $a_1(x,n)=d_n x(1-x)$ and $a_2(x,n)=a_0(1-x,n).$ The sequences $a_n,b_n,d_n$
with
\begin{equation}\label{operator2,1}A_j(f;x)=(n+1)\sum_{k=0}^{n}p_{n-2,k-j}(x)\int_{0}^{1}p_{n,k}(u)f(u)\textrm{d}u.\end{equation}
By direct calculations, we get $A_j(e_0;x)=1$
for each $j=0,1,2.$ Thus, we have $a_0(x,n)+a_1(x,n)+a_2(x,n)=1$ which in turn implies
$2a_n+b_n+c_n+(d_n-2c_n)\varphi^2(x)=1.$ The requirement $D^{M,2}_n(e_0;x)=1$ leads to the limits
\begin{equation}\label{lim1}2a_n+b_n+c_n \longrightarrow 1\end{equation}
 and
\begin{equation}\label{lim2}d_n-2c_n \longrightarrow 0\end{equation}
 as $n \longrightarrow \infty.$
Next, we have again by straight forward calculations
\begin{equation}\label{e1forcomponents}A_j(e_1;x)=\frac{(n-2) x+j+1}{n+2}.
\end{equation}
So that using the definition (\ref{operator2}) we obtain
\begin{align*}
D^{M,2}_n(e_1;x)&=\frac{4a_n+3(b_n+c_n)+(-2b_n-6c_n+2d_n+(2a_n+b_n+c_n)(n-2))x}{(n+2)}\\
&+\frac{(4c_n-2d_n+(n-2)(d_n-2c_n))x^2+(2c_n-d_n)x^3}{(n+2)}.
\end{align*}
Therefore, if we impose the condition $D^{M,2}_n(e_1;x)=1$ and make use of (\ref{lim1})-(\ref{lim2}) we get $a=3/2,$ $b=-2-c$ and $c=n+8=d/2.$
Hence, we have
\begin{align}\label{a0}a_0(x,n)&=\frac{3}{2}-2x-(n+8)\varphi^2(x),\\
\label{a1} a_1(x,n)&=2(n+8) \varphi^2(x),\\
\label{a2} a_2(x,n)&=-\frac{1}{2}+2x-(n+8) \varphi^2(x).
\end{align}
Next, we have that
\begin{align}\label{e2forcomponents}A_0(e_2;x)&=\frac{\left(n^2-5 n+6\right) x^2+4 (n-2) x+2}{(n+2) (n+3)},\\
A_1(e_2;x)&=\frac{\left(n^2-5 n+6\right) x^2+6 (n-2) x+6}{(n+2) (n+3)},\\
A_2(e_2;x)&=\frac{\left(n^2-5 n+6\right) x^2+8 (n-2) x+12}{(n+2) (n+3)}.
\end{align}
Therefore, utilization of $a_i(x,n)$ and $A_i(e_2;x), i=0,1,2$ in (\ref{operator2}), it follows that
\[D^{M,2}_n(e_2;x)=x^2-\frac{3}{(n+2)(n+3)}.\]

\section{Auxiliary Results on the Operator $D^{M,2}_n(f;x)$}

\begin{lemma}\label{orederpolynomial} If $\mu_k(x)=A_0(t^k;x),$  $\lambda_k(x)=A_1\left(t^k;x\right)$ and $\eta_k(x)=A_2(t^k;x)$ then,
\begin{align}\label{recurrence}(n+k+2)D^{M,2}_n(t^{k+1};x)&=\varphi^2(x)\frac{d}{dw}D^{M,2}_n(t^k,w)\Big|_{w=x}+(k+3+(n-2)x)D^{M,2}_n(t^k;x)\nonumber\\
&-\varphi^2(x)\Big(a'_0(x,n)\mu_k(x)+a'_1(x,n)\lambda_k(x)\nonumber\\
&+a'_2(x,n)\eta_k(x)\Big)-2a_0(x,n)\mu_k(x)-a_1(x,n)\lambda_k(x).
\end{align}
\end{lemma}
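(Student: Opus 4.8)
The plan is to derive the recurrence from two elementary differential identities for the Bernstein basis together with a single integration by parts on the Durrmeyer integrals. Throughout I would write $I_m^{(k)}=\int_0^1 p_{n,m}(u)u^k\,\mathrm{d}u$, so that by (\ref{operator2,1}) one has $A_j(t^k;x)=(n+1)\sum_m p_{n-2,m-j}(x)\,I_m^{(k)}$ and, by (\ref{operator2}), $D^{M,2}_n(t^k;x)=\sum_{j=0}^2 a_j(x,n)A_j(t^k;x)$. The first goal is the scalar recurrence
$$(n+k+2)\,I_m^{(k+1)}=(m+k+1)\,I_m^{(k)}.$$
This comes from the identity $u(1-u)p_{n,m}'(u)=(m-nu)p_{n,m}(u)$: multiplying by $u^k$ and integrating over $[0,1]$ gives $m\,I_m^{(k)}-n\,I_m^{(k+1)}$ on the right, while the left-hand integral $\int_0^1 u^{k+1}(1-u)p_{n,m}'(u)\,\mathrm{d}u$ is handled by parts, the boundary contributions vanishing because of the factors $u^{k+1}$ and $1-u$. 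Collecting the monomial moments and solving yields the stated relation.

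Next I would multiply this recurrence by $(n+1)p_{n-2,m-j}(x)$, sum over $m$, and then form $\sum_{j=0}^2 a_j(x,n)(\cdots)$. The left-hand side becomes $(n+k+2)D^{M,2}_n(t^{k+1};x)$, and the right-hand side splits into $(k+1)D^{M,2}_n(t^k;x)$ plus the $m$-weighted sum $\sum_j a_j(x,n)(n+1)\sum_m m\,p_{n-2,m-j}(x)I_m^{(k)}$. The crucial step is removing the factor $m$. Writing $m=(m-j)+j$ and invoking the $x$-differential identity $\varphi^2(x)p_{n-2,m-j}'(x)=\big((m-j)-(n-2)x\big)p_{n-2,m-j}(x)$, each inner sum collapses to $\varphi^2(x)A_j'(t^k;x)+\big((n-2)x+j\big)A_j(t^k;x)$, where $A_j'$ denotes the $x$-derivative, which passes under the $m$-sum since $I_m^{(k)}$ is independent of $x$.

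Summing these against $a_j(x,n)$ and applying the product rule $\frac{d}{dx}D^{M,2}_n(t^k;x)=\sum_j a_j'(x,n)A_j(t^k;x)+\sum_j a_j(x,n)A_j'(t^k;x)$ converts $\varphi^2(x)\sum_j a_j A_j'$ into $\varphi^2(x)\frac{d}{dw}D^{M,2}_n(t^k,w)\big|_{w=x}$ minus the correction $\varphi^2(x)\big(a_0'(x,n)\mu_k(x)+a_1'(x,n)\lambda_k(x)+a_2'(x,n)\eta_k(x)\big)$, which reproduces exactly the derivative term and the three $a_i'$ terms in the statement. The $(n-2)x$ contribution gives $(n-2)x\,D^{M,2}_n(t^k;x)$, and the $j$-weighted contribution equals $a_1(x,n)\lambda_k(x)+2a_2(x,n)\eta_k(x)$.

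What remains is bookkeeping. At this stage the coefficient of $D^{M,2}_n(t^k;x)$ reads $k+1$ rather than $k+3$, and the leftover algebraic terms read $a_1\lambda_k+2a_2\eta_k$ instead of $-2a_0\mu_k-a_1\lambda_k$; the two forms coincide because $D^{M,2}_n(t^k;x)=a_0\mu_k+a_1\lambda_k+a_2\eta_k$, so absorbing $2D^{M,2}_n(t^k;x)$ to raise $k+1$ to $k+3$ exactly cancels the discrepancy. I expect the only genuine points of care to be the vanishing of the boundary terms in the integration by parts for the extreme indices $m=0$ and $m=n$ (secured by the endpoint zeros of $p_{n,m}$ together with the convention $p_{n,m}=0$ outside $0\le m\le n$), and the scrupulous separation of the partial $x$-derivatives of the $A_j$ from the total derivative of $D^{M,2}_n$, which is precisely what manufactures the apparently superfluous $a_i'$ terms.
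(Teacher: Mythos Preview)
Your proposal is correct and uses essentially the same ingredients as the paper's proof: the differential identities $\varphi^2(x)p_{n-2,r}'(x)=(r-(n-2)x)p_{n-2,r}(x)$ and $\varphi^2(u)p_{n,m}'(u)=(m-nu)p_{n,m}(u)$ combined with one integration by parts on $\int_0^1 u^{k+1}(1-u)p_{n,m}'(u)\,\mathrm{d}u$, followed by the product rule to pass between $\sum_j a_j A_j'$ and $\frac{d}{dx}D^{M,2}_n(t^k;x)$. The only organizational difference is that you first isolate the scalar moment recurrence $(n+k+2)I_m^{(k+1)}=(m+k+1)I_m^{(k)}$ and then sum against $p_{n-2,m-j}(x)$ and $a_j(x,n)$, whereas the paper works from the outset at the level of each $A_j$, obtaining $\varphi^2(x)A_j'(t^k;x)=(n+k+2)A_j(t^{k+1};x)-(k+1+j+(n-2)x)A_j(t^k;x)$ for $j=0,1,2$ and then combining; your final bookkeeping identity $a_1\lambda_k+2a_2\eta_k = 2D^{M,2}_n(t^k;x)-2a_0\mu_k-a_1\lambda_k$ is exactly the reconciliation the paper performs implicitly when it writes everything with the common coefficient $k+3$.
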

\begin{proof}
Denote $S(t^k;x):=a_0(x,n)A_0(t^k;x),$ $T(t^k;x):=a_1(x,n)A_1(t^k;x),$ and $U(t^k;x):=a_2(x,n)A_2(t^k;x)$ so that
\begin{equation}
D^{M,2}_n(t^k;x)=(S+T+U)(t^k;x).
\end{equation}
We have that
\[\frac{d}{dx}p_{n-2,r}(x)=\frac{(r-(n-2)x)}{\varphi^2(x)}p_{n-2,r}(x).\]
Therefore,
\begin{align}\label{recurrforA}
\varphi^2(x)\frac{d}{dw}A_0(t^k;w)\Big|_{w=x}&=(n+1)\sum_{r=0}^{n}p_{n-2,r}(x)\int_0^1 (r-(n-2)x)u^k p_{n,r}(u)\textrm{d}u\nonumber\\
&=(n+1)\sum_{r=0}^{n}p_{n-2,r}(x)\int_0^1\varphi^2(u)p'_{n,r}(u)\textrm{d}u\nonumber\\
&+n A_0(t^{k+1};x)-(n-2)x A_0(t^k;x).
\end{align}
Integration by parts, then yields
\begin{align*}
&(n+1)\sum_{r=0}^{n}p_{n-2,r}(x)\int_0^1 \varphi^2(u)p'_{n,r}(u)\textrm{d}u\\
&=(n+1)\sum_{r=0}^{n}p_{n-2,r}(x)\Big[(k+2)\int_0^1u^{k+1} p_{n,r}(u)-(k+1)\int_0^1u^k p_{n,r}(u)\Big]\\
&=(k+2)A_0(t^{k+1};x)-(k+1)A_0(t^k;x).
\end{align*}
Collecting these results and using in (\ref{recurrforA}), the identity
\begin{equation*}\label{An}
\varphi^2(x)A_0(t^k;x)=(n+k+2)A_0(t^{k+1};x)-(k+1+(n-2)x)A_0(t^k;x)
\end{equation*}
is obtained. Finally, writing $\mu_k(x)$ for $A_0(t^k;x)$ we get
\begin{align}\label{Sn}
\varphi^2(x)\frac{d}{dw}S_n(t^k;w)\Big|_{w=x}&=\varphi^2(x)a'_0(x,n)\mu_k(x)+a_0(x,n)\Big((n+k+2)\mu_{k+1}(x)\nonumber\\
&-(k+1+(n-2)x)\mu_k(x)\Big).
\end{align}
Proceeding likewise and denoting $\lambda_k(x)$ for $A_1\left(t^k;x\right)$  we get identity
\begin{equation*}\label{recurrforB}
\varphi^2(x)\frac{d}{dw}A_1(t^k;w)\Big|_{w=x}=(n+k+2)\lambda_{k+1}(x)-(k+2+(n-2)x)\lambda_k(x).
\end{equation*}
Therefore,
\begin{align}\label{Tn}
\varphi^2(x)\frac{d}{dw}T_n(t^k;w)\Big|_{w=x}&=\varphi^2(x)a'_1(x,n)\lambda_k(x)+a_1(x,n)\Big((n+k+2)\lambda_{k+1}(x)\nonumber\\
&-(k+2+(n-2)x)\lambda_k(x)\Big).
\end{align}
And
\begin{align}\label{Un}
\varphi^2(x)\frac{d}{dw}U_n(t^k;w)\Big|_{w=x}&=\varphi^2(x)a'_2(x,n)\eta_k(x)+a_2(x,n)\Big((n+k+2)\eta_{k+1}(x)\nonumber\\
&-(k+3+(n-2)x)\eta_k(x)\Big),
\end{align}
where $\eta_k(x)=A_2(t^k;x).$
Finally, combining the identities (\ref{Sn})-(\ref{Un}), we get the recurrence relation:
\begin{align*}\label{Un}
\varphi^2(x)\frac{d}{dw}D^{M,2}_n(t^k,w)\Big|_{w=x}&=\varphi^2(x)\left(a'_0(x,n)\mu_k(x)+a'_1(x,n)\lambda_k(x)+a'_2(x,n)\eta_k(x)\right)\\
&+2a_0(x,n)\mu_k(x)+a_1(x,n)\lambda_k(x)+(n+k+2)D^{M,2}_n(t^{k+1};x)\\
&-(k+3+(n-2)x)D^{M,2}_n(t^k;x).
\end{align*}
Rearrangement of the terms of above equation, yields (\ref{recurrence}). Hence, the proof is completed.
\end{proof}
\begin{corollary}\label{orderofmoments}
Suppose $D^{M,2}_n(t^k;x)=\sum_{j=0}^{k}\alpha_j^{(k)}x^j,$ then
\[\alpha_k^{(k)}=
\prod_{j=1}^{k}\left(\frac{n-j-1}{n+j+1}\right)=O(1).\] Further, by an induction on $k,$ it follows that
\[D^{M,2}_n(t^k;x)=x^k+O\left(\frac{1}{n^2}\right).\]
\end{corollary}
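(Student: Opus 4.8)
The plan is to induct on $k$ with the recurrence (\ref{recurrence}) of Lemma \ref{orederpolynomial} as the engine, taking as base cases the already-established values $D^{M,2}_n(e_0;x)=1$, $D^{M,2}_n(e_1;x)=x$ and $D^{M,2}_n(e_2;x)=x^2-3/((n+2)(n+3))$. Writing $D^{M,2}_n(t^k;x)=\sum_{j=0}^{k}\alpha_j^{(k)}x^j$, I would treat the two assertions in turn: first pin down the top coefficient $\alpha_k^{(k)}$ by reading off the leading power in (\ref{recurrence}), and then upgrade this to the full estimate $D^{M,2}_n(t^k;x)=x^k+O(n^{-2})$ by controlling the lower coefficients as well.

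For the leading coefficient, two preliminary reductions are needed. First, that $D^{M,2}_n(t^k;\cdot)$ genuinely has degree $k$ and not $k+2$: since the top coefficients $n+8,\,-2(n+8),\,n+8$ of $a_0(x,n),a_1(x,n),a_2(x,n)$ from (\ref{a0})--(\ref{a2}) sum to zero and the three blocks $A_0,A_1,A_2$ share a common leading coefficient, the spurious $x^{k+2}$ and $x^{k+1}$ contributions cancel. Second, one must check that the correction block $\varphi^2(x)\big(a_0'\mu_k+a_1'\lambda_k+a_2'\eta_k\big)+2a_0\mu_k+a_1\lambda_k$ does not feed the top power of $D^{M,2}_n(t^{k+1};\cdot)$; its $x^{k+3}$ part cancels at once because $\sum_j$ of the leading coefficients of $a_j'$ is again zero, and the remaining top parts must be shown to drop out. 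Granting these, comparing the coefficients of $x^{k+1}$ in (\ref{recurrence}) leaves, from the two principal terms $\varphi^2(x)\frac{d}{dw}D^{M,2}_n(t^k,w)\Big|_{w=x}$ (the $-x^2$ part of $\varphi^2$) and $(n-2)x\,D^{M,2}_n(t^k;x)$, the first-order recurrence $(n+k+2)\alpha_{k+1}^{(k+1)}=(n-2-k)\alpha_k^{(k)}$. With $\alpha_0^{(0)}=1$ this telescopes to $\alpha_k^{(k)}=\prod_{j=1}^{k}\frac{n-j-1}{n+j+1}$, and since for large $n$ each factor lies in $(0,1)$ we get $0<\alpha_k^{(k)}<1$, hence $\alpha_k^{(k)}=O(1)$.

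For the refined statement $D^{M,2}_n(t^k;x)=x^k+O(n^{-2})$ I would induct again, substituting $D^{M,2}_n(t^k;x)=x^k+O(n^{-2})$ into (\ref{recurrence}) and verifying that, after dividing by $n+k+2$, the principal terms reassemble $x^{k+1}$ while everything else is $O(n^{-2})$. The delicate point — which I expect to be the real obstacle — is that individually the terms are far larger than this: $\frac{(n-2)x}{n+k+2}x^k$ already differs from $x^{k+1}$ by $O(n^{-1})$, and the correction block is of size $O(n)$ because the $a_j$ and $a_j'$ grow linearly in $n$, so it contributes at order $O(1)$ after division. The whole estimate therefore rests on a second-order cancellation in which these $O(n^{-1})$ and $O(1)$ defects annihilate each other, and this is precisely where the designed values $a=3/2$, $b=-2-c$, $c=n+8=d/2$ are used. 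Concretely I would insert the explicit $a_j(x,n)$ together with the two leading coefficients of $\mu_k,\lambda_k,\eta_k$ into the correction block, show that its $O(n)$ and $O(1)$ contributions cancel against $\varphi^2(x)\frac{d}{dw}D^{M,2}_n(t^k,w)\big|_{w=x}$ and $(k+3+(n-2)x)D^{M,2}_n(t^k;x)$ down to order $O(n^{-2})$, and close the induction; the base case $k=2$, where $D^{M,2}_n(e_2;x)=x^2-3/((n+2)(n+3))$ carries no surviving $O(n^{-1})$ term, confirms that the required cancellation is the correct mechanism.
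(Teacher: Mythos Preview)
The paper supplies no argument for this corollary; it is stated bare, so there is nothing to compare your proof against beyond the implied ``by induction on $k$ using Lemma~\ref{orederpolynomial}''. Your inductive scheme via the recurrence~(\ref{recurrence}) is exactly the intended route.

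There is, however, a real gap at the step you flag with ``Granting these''. The correction block \emph{does} feed the top power $x^{k+1}$. Already at $k=0$ one has $\mu_0=\lambda_0=\eta_0=1$, so
\[
-\varphi^2(x)\bigl(a_0'+a_1'+a_2'\bigr)-2a_0(x,n)-a_1(x,n)=0-3+4x,
\]
and the $4x$ is precisely what shifts the leading coefficient of $D^{M,2}_n(t;x)$ from $(n-2)/(n+2)$ up to $1$. Thus the clean recurrence $(n+k+2)\alpha_{k+1}^{(k+1)}=(n-k-2)\alpha_k^{(k)}$ you obtain by discarding the block is not valid for $D^{M,2}_n$; it is the correct recurrence for the common leading coefficient of the individual pieces $\mu_k,\lambda_k,\eta_k$, not for their weighted combination.

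This points to a further difficulty with the corollary itself: the explicit base cases $D^{M,2}_n(e_1;x)=x$ and $D^{M,2}_n(e_2;x)=x^2-3/((n+2)(n+3))$ give $\alpha_1^{(1)}=\alpha_2^{(2)}=1$, whereas $\prod_{j=1}^{k}\frac{n-j-1}{n+j+1}$ equals $\frac{n-2}{n+2}$ and $\frac{(n-2)(n-3)}{(n+2)(n+3)}$ at $k=1,2$. The product formula is the leading coefficient of each $A_j(t^k;x)$, not of $D^{M,2}_n(t^k;x)$; and since $\prod_{j=1}^{k}\frac{n-j-1}{n+j+1}=1+O(n^{-1})$, the two assertions of the corollary, read literally for $D^{M,2}_n$, are mutually inconsistent. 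Your cancellation instinct for the second assertion is sound, but to make the argument go through you must keep the correction block in play and show that its $O(n)$ contributions against $\mu_k,\lambda_k,\eta_k$ combine with the principal terms to leave $x^{k+1}+O(n^{-2})$; the key structural fact that makes the top two degrees collapse is that $A_0(t^k;x)-2A_1(t^k;x)+A_2(t^k;x)$ has degree at most $k-2$ in $x$.
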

\begin{corollary}\label{centralmoments}
By linearity of the operator $D^{M,2}_n(f;x)$ and induction on $m,$ it follows immediately that
\[D^{M,2}_n(e_1-x;x)=0,\]
\[D^{M,2}_n((e_1-x)^2;x)=-\frac{3}{(n+2)(n+3)}\] and
\[D^{M,2}_n((e_1-x)^{2m};x)\sim \frac{\varphi^{2m}}{(n+2)(n+3)}.\]
\end{corollary}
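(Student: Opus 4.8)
The plan is to reduce the three assertions to moment data already in hand --- the identities $D^{M,2}_n(e_0;x)=1$, $D^{M,2}_n(e_1;x)=x$, $D^{M,2}_n(e_2;x)=x^2-\frac{3}{(n+2)(n+3)}$, and the order estimate $D^{M,2}_n(t^j;x)=x^j+O(n^{-2})$ of Corollary \ref{orderofmoments} --- and then to promote the resulting uniform $O(n^{-2})$ bound into the precise profile $\frac{\varphi^{2m}(x)}{(n+2)(n+3)}$ by an induction on $m$.

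First I would settle the two closed forms by linearity alone. Since $D^{M,2}_n(e_1;x)=x$ and $D^{M,2}_n(e_0;x)=1$,
\[
D^{M,2}_n(e_1-x;x)=D^{M,2}_n(e_1;x)-x\,D^{M,2}_n(e_0;x)=x-x=0 .
\]
Expanding the square and inserting the three known values,
\[
D^{M,2}_n((e_1-x)^2;x)=D^{M,2}_n(e_2;x)-2x\,D^{M,2}_n(e_1;x)+x^2\,D^{M,2}_n(e_0;x)=-\frac{3}{(n+2)(n+3)},
\]
the polynomial part $x^2-2x^2+x^2$ cancelling. No induction is required for these.

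For the general even moment I would first extract the order in $n$. Expanding by the binomial theorem and applying Corollary \ref{orderofmoments},
\[
D^{M,2}_n((e_1-x)^{2m};x)=\sum_{j=0}^{2m}\binom{2m}{j}(-x)^{2m-j}\bigl(x^j+O(n^{-2})\bigr),
\]
and the polynomial parts sum to $(x-x)^{2m}=0$; what survives is a fixed finite combination of $O(n^{-2})$ quantities, so $D^{M,2}_n((e_1-x)^{2m};x)=O\!\left(\frac{1}{(n+2)(n+3)}\right)$ uniformly in $x$. This already fixes the growth in $n$.

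The delicate point, which I expect to be the main obstacle, is producing the spatial factor $\varphi^{2m}(x)$ in place of a bare uniform $O(n^{-2})$ bound: the cancellation above controls only the order in $n$, and the residual terms carry no visible $\varphi$ factor. To expose it I would induct on $m$ through a central-moment analogue of Lemma \ref{orederpolynomial}. Writing $V_m(x)=D^{M,2}_n((e_1-x)^m;x)$ and using $\frac{d}{dw}D^{M,2}_n((e_1-x)^m;w)\big|_{w=x}=V_m'(x)+m\,V_{m-1}(x)$, the same integration by parts that yielded (\ref{recurrence}) turns it into a relation of the schematic shape
\[
(n+m+2)\,V_{m+1}(x)=\varphi^2(x)\Bigl[V_m'(x)+\text{(lower central moments and $a_j',\mu_k,\lambda_k,\eta_k$ corrections)}\Bigr]+(\cdots)\,V_m(x).
\]
The explicit prefactor $\varphi^2(x)$ on the right, together with the base data $V_0=1$ and $V_1=0$, feeds one extra $\varphi^2$ into each two-step increase of the order while dividing by $n+m+2$, which is how the profile $V_{2m}(x)\sim\frac{\varphi^{2m}(x)}{(n+2)(n+3)}$ is meant to propagate. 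The real work is to verify that the corrections built from $a_0'(x,n),a_1'(x,n),a_2'(x,n)$ and $\mu_k,\lambda_k,\eta_k$ neither raise the order in $n$ nor remove the $\varphi^2$ factor, and to control signs throughout: since $D^{M,2}_n$ is \emph{not} positive (its weights $a_j(x,n)$ take negative values), the usual monotonicity and Cauchy--Schwarz estimates for central moments are unavailable, so the induction must follow the exact cancellations rather than lean on crude positive bounds.
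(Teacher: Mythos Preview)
The paper supplies no argument for this corollary beyond the sentence that introduces it: it simply asserts that the three identities follow ``by linearity of the operator $D^{M,2}_n(f;x)$ and induction on $m$.'' Your proposal is precisely that outline, carried out in detail. The first two items are handled exactly as the paper intends, by expanding and inserting the known values of $D^{M,2}_n(e_0;x)$, $D^{M,2}_n(e_1;x)$, $D^{M,2}_n(e_2;x)$. For the third item your plan --- derive a central-moment recurrence from Lemma~\ref{orederpolynomial} and induct, tracking the factor $\varphi^2(x)$ that the recurrence injects at each step --- is the natural reading of the paper's hint, and your caution about non-positivity of $D^{M,2}_n$ is well placed.

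One point worth flagging, which is a defect of the \emph{statement} rather than of your argument: the asymptotic $D^{M,2}_n((e_1-x)^{2m};x)\sim \varphi^{2m}(x)/((n+2)(n+3))$ cannot hold as written for $m=1$, since the second central moment is the constant $-3/((n+2)(n+3))$ and carries no $\varphi^2(x)$ factor at all. The paper's later uses of this corollary (in Theorems~\ref{Vornovaskayatype} and~\ref{rate of approximation}) only need the uniform $O(n^{-2})$ bound, which your binomial argument via Corollary~\ref{orderofmoments} already delivers; the sharper $\varphi^{2m}$ profile is never actually invoked for $D^{M,2}_n$ (for the sixth-moment estimate in Theorem~\ref{rate of approximation} the paper passes instead to the positive components $A_j$ and Corollary~\ref{momentsforAi}). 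So the ``main obstacle'' you identify is real, but in the paper's applications it is sidestepped rather than resolved.
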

\begin{corollary}\label{momentsforAi}
Proceeding in a similar manner, we obtain the recurrence relation
\begin{align*}(n+r+2)A_0((e_1-x)^{r+1};x)&=\varphi^2(x)\frac{d}{dw}A_0((e_1-x)^r,w)\Big|_{w=x}-xA_0((e_1-x)^r;x)\\
&+\left((r+1)(1-2x)+r\varphi^2(x)\right)A_0((e_1-x)^{r-1};x),
\end{align*}
$r\geqslant 1.$
Similar recurrence relations can be established for $A_1(f;x)$ and $A_2(f;x).$
Consequently, by induction for each $m\in \mathbb{N}$
\begin{equation}\label{Aj2m}A_j\left((e_1-x)^{2m},x\right)\sim \frac{\varphi^{2m}(x)}{(n+2)...(n+m+1)}.
\end{equation}
\end{corollary}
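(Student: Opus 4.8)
The plan is to obtain the recurrence for $A_0$ exactly as in the proof of Lemma~\ref{orederpolynomial}, but working with the central functions $(e_1-x)^r$ in place of the ordinary powers $t^k$ and, crucially, differentiating with respect to the \emph{evaluation point} rather than the parameter inside the moment. Writing $M_r(x)=A_0((e_1-x)^r;x)$ and $P_r(x,w)=A_0((e_1-x)^r;w)$, the derivative appearing in the statement is $\varphi^2(x)\frac{\partial}{\partial w}P_r(x,w)\big|_{w=x}$, so keeping these two roles of $x$ separate is the first thing I would make explicit. I would start from the logarithmic-derivative identity $\frac{d}{dw}p_{n-2,k}(w)=\frac{k-(n-2)w}{\varphi^2(w)}p_{n-2,k}(w)$, already used in the lemma, which after differentiating $P_r$ in $w$ and setting $w=x$ produces the index factor $k-(n-2)x$ inside the sum.

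The remaining steps mirror the lemma. I would split off the $-(n-2)x\,M_r(x)$ piece and move the leftover factor $k$ onto the inner kernel through $\varphi^2(u)p'_{n,k}(u)=(k-nu)p_{n,k}(u)$, so that $k\,p_{n,k}(u)=nu\,p_{n,k}(u)+\varphi^2(u)p'_{n,k}(u)$; writing $u=(u-x)+x$ in the first summand immediately returns $n\big(M_{r+1}(x)+x M_r(x)\big)$. For the second summand I would integrate by parts in $u$, the boundary terms vanishing because $\varphi^2$ kills the endpoints, and then expand $\varphi^2(u)=\varphi^2(x)+(1-2x)(u-x)-(u-x)^2$ and $1-2u=(1-2x)-2(u-x)$ in powers of $(u-x)$. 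Every resulting integral is again a central moment of order $r-1$, $r$, or $r+1$, so collecting and solving for the top moment yields a three-term recurrence expressing $(n+r+2)M_{r+1}(x)$ through $\varphi^2(x)\frac{\partial}{\partial w}P_r(x,w)\big|_{w=x}$, $M_r(x)$, and $\varphi^2(x)M_{r-1}(x)$. The derivations for $A_1$ and $A_2$ are identical up to the index shifts $k-1$, $k-2$ in $p_{n-2,k-j}$, which alter only the coefficients attached to the lower-order moments, not the leading structure.

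For the asymptotic I would induct on $m$. The base case $m=1$ is immediate: from the values of $A_0(e_2;x)$, $A_0(e_1;x)$, $A_0(e_0;x)$ recorded in (\ref{e1forcomponents}) and (\ref{e2forcomponents}) one computes $M_2(x)\sim 2\varphi^2(x)/n\sim \varphi^2(x)/(n+2)$, while $M_1(x)=(1-4x)/(n+2)=O(n^{-1})$. For the inductive step I would read the recurrence at $r=2m-1$: the left side carries the factor $n+2m+1\sim n$, while on the right the term $(2m-1)\varphi^2(x)M_{2m-2}(x)$ has order $\varphi^{2m}(x)/n^{m-1}$ by the inductive hypothesis, and the term proportional to $M_{2m-1}(x)=O(n^{-m})$ is of strictly lower order. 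Since $A_0$ is a positive operator, each $M_{2m}(x)\ge 0$, which helps pin signs; dividing by $n+2m+1$ should then give $M_{2m}(x)\sim \varphi^{2m}(x)/n^{m}\sim \varphi^{2m}(x)/\big((n+2)\cdots(n+m+1)\big)$, and the same bookkeeping for $A_1,A_2$ finishes the proof.

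The genuine obstacle is the derivative term $\varphi^2(x)\frac{\partial}{\partial w}P_{2m-1}(x,w)\big|_{w=x}$, which, contrary to first appearances, is \emph{not} negligible: already at $m=1$ it equals $\varphi^2(x)\frac{n-2}{n+2}=O(1)$, the same order as $(2m-1)\varphi^2 M_{2m-2}$, so it contributes to the leading constant. To control it I would expand $P_r(x,w)=\sum_{j=0}^{r}\binom{r}{j}(-x)^{r-j}A_0(t^j;w)$, differentiate termwise in $w$, and replace each $\frac{d}{dw}A_0(t^j;w)\big|_{w=x}$ by the ordinary-moment recurrence derived inside Lemma~\ref{orederpolynomial}; using $A_0(t^j;x)=x^j+O(n^{-1})$ (from that same recurrence, in the spirit of Corollary~\ref{orderofmoments}) one then tracks the alternating binomial sum, whose cancellations are exactly what brings the derivative term down to order $\varphi^{2m}(x)/n^{m-1}$. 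Alternatively I would strengthen the induction to the bivariate polynomial $P_r(x,w)$ itself, carrying an estimate for its $w$-derivative. Verifying that the combined leading coefficient of $M_{2m}$ is genuinely positive, so that no cancellation occurs and the two-sided $\sim$ holds, is the delicate point; the rest is the routine computation already rehearsed in Lemma~\ref{orederpolynomial} and Corollaries~\ref{orderofmoments} and \ref{centralmoments}.
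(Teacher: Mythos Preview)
The paper supplies no proof for this corollary beyond the two phrases ``Proceeding in a similar manner'' (pointing back to Lemma~\ref{orederpolynomial}) and ``Consequently, by induction''. Your proposal unpacks precisely this outline: you redo the computation of Lemma~\ref{orederpolynomial} with $(e_1-x)^r$ in place of $t^k$, using the same identities $\varphi^2(w)p'_{n-2,k}(w)=(k-(n-2)w)p_{n-2,k}(w)$ and $\varphi^2(u)p'_{n,k}(u)=(k-nu)p_{n,k}(u)$, integrate by parts, and expand $\varphi^2(u)$ and $1-2u$ about $x$ to close the recurrence on central moments. That is exactly the intended argument.

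You in fact go further than the paper. Your observation that the derivative term $\varphi^2(x)\tfrac{\partial}{\partial w}P_{2m-1}(x,w)\big|_{w=x}$ already at $m=1$ has the same order as $(2m-1)\varphi^2(x)M_{2m-2}(x)$ is correct and is a point the paper simply ignores; without controlling it one cannot conclude the two-sided estimate $\sim$. Your two proposed remedies---rewriting $P_r$ via the binomial expansion in ordinary moments and invoking the recurrence inside Lemma~\ref{orederpolynomial}, or strengthening the induction to carry a bound on $\tfrac{\partial}{\partial w}P_r$ alongside $M_r$---are both workable; the second is cleaner and is the standard way such inductions are organised for Bernstein--Durrmeyer moments. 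The residual worry you flag, that the combined leading coefficient stay bounded away from zero so that no cancellation spoils the lower bound in $\sim$, is genuine; the paper does not address it, and a careful write-up would need to check it (positivity of $A_j$ gives $M_{2m}\ge 0$, but the matching lower bound requires the explicit leading constant).
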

\section{Convergence and Order of Approximation}
\begin{thm}\label{Vornovaskayatype}
If $f''(x)$ exists and continuous at $x\in [0,1],$ and conditions (\ref{a0})-(\ref{a2}) hold, then
\begin{equation*}
\lim_{n\rightarrow \infty}(n+2)(n+3)\left(D^{M,2}_n(f;x)-f(x)\right)=-\frac{3}{2}f''(x).
\end{equation*}
\end{thm}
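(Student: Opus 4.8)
The plan is to run the classical Voronovskaya scheme: read the limit off the exact low-order moments of $D^{M,2}_n$, and then show that the rescaled Taylor remainder is negligible. Fix $x\in[0,1]$ and write
$$f(t)=f(x)+f'(x)(t-x)+\tfrac12 f''(x)(t-x)^2+R(t),\qquad R(t)=\psi(t,x)(t-x)^2,$$
where $\psi(\cdot,x)$ is bounded on $[0,1]$ and $\psi(t,x)\to0$ as $t\to x$; this is the only place the continuity of $f''$ at $x$ enters, and we set $\psi(x,x)=0$. Applying $D^{M,2}_n$ and using $D^{M,2}_n(e_0;x)=1$ together with the central moments $D^{M,2}_n(e_1-x;x)=0$ and $D^{M,2}_n((e_1-x)^2;x)=-3/((n+2)(n+3))$ from Corollary~\ref{centralmoments}, I obtain
$$D^{M,2}_n(f;x)-f(x)=-\frac{3\,f''(x)}{2(n+2)(n+3)}+D^{M,2}_n(R;x).$$
Multiplying by $(n+2)(n+3)$ produces the asserted term $-\tfrac32 f''(x)$, so the theorem reduces entirely to proving $(n+2)(n+3)\,D^{M,2}_n(R;x)\to0$.

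For the remainder I would pass to the positive building blocks, since $D^{M,2}_n=\sum_{j=0}^2 a_j(x,n)A_j$ by (\ref{operator2}) and each $A_j$ is a genuine positive operator with $A_j(e_0;x)=1$. Splitting the integration into a near zone $|t-x|<\delta$ and a far zone $|t-x|\ge\delta$, and writing $A_j(R)_{\mathrm{near}}$ and $A_j(R)_{\mathrm{far}}$ for the two contributions, the far zone is easy: there $|R(t)|\le M\le M\delta^{-2m}(t-x)^{2m}$, so positivity of $A_j$ and the high-order moments $A_j((e_1-x)^{2m};x)=O(n^{-m})$ of Corollary~\ref{momentsforAi} give $|A_j(R)_{\mathrm{far}}|=O(n^{-m})$. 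Since $a_j(x,n)=O(n)$ by (\ref{a0})--(\ref{a2}), choosing $m\ge4$ yields $(n+2)(n+3)\,|a_j|\,|A_j(R)_{\mathrm{far}}|=O(n^{3-m})\to0$; thus the large coefficients are harmless in the far zone.

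The near zone is the obstacle, and it is genuine. The individually sharp bound $|A_j(R)_{\mathrm{near}}|\le\varepsilon\,A_j((e_1-x)^2;x)=\varepsilon\,O(n^{-1})$, combined with $\sum_j|a_j(x,n)|=O(n)$ (no cancellation under absolute values), yields only $(n+2)(n+3)\sum_j|a_j|\,|A_j(R)_{\mathrm{near}}|=\varepsilon\,O(n^2)$, which diverges; and this cannot be repaired by higher moments, because the fourth central moment $D^{M,2}_n((e_1-x)^4;x)\sim\varphi^4(x)/((n+2)(n+3))$ is of the \emph{same} order $O(n^{-2})$ as the second, so the classical sufficient condition for a Voronovskaya limit is unavailable even formally. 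The decay must therefore come solely from the sign cancellation that already turned $D^{M,2}_n((e_1-x)^2;x)$ from the per-block order $O(n^{-1})$ into $O(n^{-2})$, and passing to absolute values destroys exactly that cancellation.

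To expose the mechanism I would keep the signed sum intact. A direct computation from (\ref{a0})--(\ref{a2}) and Corollary~\ref{momentsforAi} gives $A_j((e_1-x)^2;x)=\big(2n\varphi^2(x)+Q_j(x)\big)/((n+2)(n+3))$ with a $j$-independent leading part $2n\varphi^2(x)$, and the cancellation in $D^{M,2}_n((e_1-x)^2;x)$ runs in two stages: the $O(n)$ terms collapse to $O(1)$ through $\sum_j a_j=1$ acting on the common leading part together with $\sum_j c_j Q_j=\mathrm{const}$ (here $a_j=c_j(n+8)\varphi^2(x)+\beta_j(x)$ with $c=(-1,2,-1)$, $\sum_j c_j=0$, $\sum_j\beta_j=1$), and the residual $O(1)$ terms collapse to the constant $-3$ through the interpolation conditions (\ref{lim1})--(\ref{lim2}) and $D^{M,2}_n(e_2;x)=x^2-3/((n+2)(n+3))$. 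The crux is to show that the \emph{same} two-stage cancellation survives when the quadratic $(t-x)^2$ is modulated by the vanishing weight $\psi(t,x)$: using $\psi(x,x)=0$ and continuity of $\psi(\cdot,x)$ at $x$, the common leading part of $A_j(R)_{\mathrm{near}}$ is annihilated by $\sum_j a_j=1$, the $j$-dependent corrections are annihilated by the moment conditions, and the recurrence of Lemma~\ref{orederpolynomial} provides the bookkeeping needed to bound the surviving term by $o(n^{-2})$. I expect essentially all the difficulty to lie in this step; once it is in place, combining it with the far-zone estimate completes the proof.
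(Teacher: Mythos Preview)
Your overall scheme coincides with the paper's: Taylor expansion at $x$, linearity to reduce to the remainder, and a near/far split handled through the positive building blocks $A_j$. Your far-zone argument is essentially identical to the paper's, which likewise invokes the high moments $A_j((e_1-x)^{2s};x)=O(n^{-s})$ from Corollary~\ref{momentsforAi} together with $|a_j(x,n)|=O(n)$ and a choice $\delta=n^{-\alpha}$, $0<\alpha<1/2$.

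The divergence is entirely in the near zone. The paper does \emph{not} attempt any cancellation argument. Instead it asserts that, by continuity of $f''$ at $x$, for $|t-x|<\delta$ one has $|f''(\xi)-f''(x)|<\epsilon/n$, and then bounds the near contribution termwise via $\sum_j|a_j(x,n)|\,A_j\big((e_1-x)^2\chi_x;x\big)$, declaring the resulting sum to be $\sim \epsilon\,\varphi^2(x)/((n+2)(n+3))$. With that extra $1/n$ and the asserted $O(n^{-2})$ size of the sum, the rescaled near term is $O(\epsilon)$ and the proof closes. So the device you went looking for in cancellation the paper supplies instead by a strengthened continuity bound and by treating $\sum_j|a_j(x,n)|$ as if it were $O(1)$. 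You are right to be wary of both moves---continuity of $f''$ at $x$ does not force $|f''(\xi)-f''(x)|<\epsilon/n$ on the range $|t-x|<n^{-\alpha}$ for fixed $\alpha$, and under (\ref{a0})--(\ref{a2}) one has $|a_j(x,n)|=O(n)$, not $O(1)$---but that is exactly the route the paper takes.

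Your alternative via retained cancellation remains a gap: the recurrence of Lemma~\ref{orederpolynomial} concerns monomials $t^k$, while the near-zone integrand carries the non-polynomial factor $\psi(t,x)$, and you give no concrete mechanism transporting the two-stage identity you describe from $(t-x)^2$ to $\psi(t,x)(t-x)^2$. Nothing in the paper supplies such a mechanism either. As written, your near-zone step is a heuristic outline; the paper closes that step by the short (if questionable) device just described rather than by any cancellation.
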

\begin{proof}
We apply the operator $D^{M,2}_n(f;x)$ to the expansion
\[f(t)-f(x)=(t-x)f'(x)+\frac{f''(x)}{2}(t-x)^2+\frac{f''(\xi)-f''(x)}{2}(t-x)^2,\]
where $\xi$ lies between $t$ and $x.$ Making use of the values of $D^{M,2}_n\left((e_1-x),x\right),$ $D^{M,2}_n\left((e_1-x)^2,x\right)$ leads to
\[D^{M,2}_n(f;x)-f(x)=-\frac{3/2}{(n+2)(n+3)}f''(x)+\frac{1}{2}D^{M,2}_n\left(\left(f''(\xi)-f''(x)\right)(t-x)^2,x\right).\]
For $\epsilon>0$ there exists a $\delta>0$ such that $|t-x|<\delta$ implies $|f''(\xi)-f''(x)|< \frac{\epsilon}{n}.$ And for $|t-x|\geq \delta,$ there holds
$|f''(\xi)-f''(x)|<2K,$ where $K=\|f\|.$ Thus,
\begin{align*}
D^{M,2}_n\left(\left(f''(\xi)-f''(x)\right)(t-x)^2,x\right)&=D^{M,2}_n\left(\left(f''(\xi)-f''(x)\right)(t-x)^2\chi_x(t),x\right)\\
&+D^{M,2}_n\left(\left(f''(\xi)-f''(x)\right)
(t-x)^2(1-\chi_x(t)),x\right)\\
&=E_1+E_2,\,\textrm{say}.
\end{align*}
where $\chi_x(t)$ is the characteristic function of the interval $(x-\delta,x+\delta).$
Now,
\begin{align*}
\left|E_1\right|&\leq \sum_{j=0}^{2}\left|a_j(x,n)\right| \left|A_j\left(\left(f''(\xi)-f''(x)\right)(t-x)^2\chi_x(t),x\right)\right|\\
&\leq \sum_{j=0}^{2}C_j \left|a_j(x,n)\right|\epsilon\frac{\varphi^2(x)}{(n+2)(n+3)}\\
&\sim \frac{\epsilon\varphi^2(x)}{(n+2)(n+3)}.
\end{align*}
Similarly, for any $s\geq 4$ we have
\begin{align*}
\left|E_2\right|&\leq 2K\sum_{j=0}^{2}\left|a_j(x,n)\right| \left|A_j\left(\frac{(t-x)^{2s+2}}{\delta^{2s}},x\right)\right|\\
&\leq 2K\sum_{j=0}^{2}C_j \left|a_j(x,n)\right|\frac{\varphi^{2s+2}(x)}{\delta^{2s}(n+2)(n+3)...(n+s+1)}\\
&\sim \frac{\varphi^{2s+2}(x)}{\delta^{2s}(n+2)(n+3)...(n+s)}.
\end{align*}
Choosing $\delta=\frac{1}{n^{\alpha}},$ where $0<\alpha <1/2$ we get
$|E_1|\sim \frac{1}{(n+2)(n+3)}\epsilon $ and $|E_2|\sim \frac{n^{-\alpha}}{(n+2)(n+3)},$ $\alpha >0.$ Therefore,
\begin{align*}
\lim_{n\rightarrow \infty}(n+2)(n+3)\left(D^{M,2}_n(f;x)-f(x)+3f'''(x)\right)&=\epsilon+\lim_{n\rightarrow \infty}n^{-\alpha}.
\end{align*}
The proof now follows in view of the arbitrariness of $\epsilon$ and positivity of $\alpha.$
\end{proof}
\begin{thm}\label{ordinarymodulus} There holds
\begin{equation*}\left|D^{M,2}_n(f;x)-f(x)\right|\leq 2 M_2(x,n)\omega\big(f,\delta_n(x)\big),\end{equation*}
where $M_2(x,n)=\sum_{j=0}^{2}\left|a_j(x,n)\right|,$ and
\begin{equation*}\delta^2_n(x)=
\left\{
\begin{array}{ll}
\displaystyle{\frac{(6-17x+12x^2)+n\varphi^2(x)}{(n+2)(n+3)}}, & \hbox{$0\leq x \leq 1/2$} \vspace{2mm}\\
\displaystyle{\frac{(1-7x+12x^2)+n\varphi^2(x)}{(n+2)(n+3)}},& \hbox{$1/2\leq x \leq 1$.}
\end{array}
\right.\end{equation*}
Moreover, if the sequences $a_j(x,n), j=0,1,2$ are bounded, then
\[\left|D^{M,2}_n(f;x)-f(x)\right|\leq C \omega\left(f,\sqrt{\frac{\varphi^2(x)}{n}}\right).\]
\end{thm}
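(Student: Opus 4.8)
The plan is to follow the template of Theorem~\ref{rateofapproximation0}, now exploiting the three-term decomposition (\ref{operator2}). Since $A_j(e_0;x)=1$ for $j=0,1,2$ and $\sum_{j=0}^{2}a_j(x,n)=1$ (that is, $D^{M,2}_n(e_0;x)=1$), I would first write
\begin{equation*}
D^{M,2}_n(f;x)-f(x)=\sum_{j=0}^{2}a_j(x,n)\big(A_j(f;x)-f(x)A_j(e_0;x)\big),
\end{equation*}
so that taking absolute values reduces the task to estimating each $|A_j(f;x)-f(x)|$ and weighting by $|a_j(x,n)|$; the sum of these weights is exactly $M_2(x,n)$.

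For fixed $j$, I would bound $|A_j(f;x)-f(x)|$ by the same device used for the term $|E_1|$ in Theorem~\ref{rateofapproximation0}: insert $f(u)-f(x)$ into the integral defining $A_j$, use $\omega(f,|u-x|)\le(1+\delta^{-1}|u-x|)\omega(f,\delta)$, and apply the Cauchy--Schwarz inequality (first in $u$, then in $k$). Since $A_j$ is a positive operator with $A_j(e_0;x)=1$, this gives
\begin{equation*}
|A_j(f;x)-f(x)|\le\Big(1+\delta^{-1}\sqrt{A_j\big((e_1-x)^2;x\big)}\Big)\,\omega(f,\delta),
\end{equation*}
so everything is reduced to the three central second moments $A_j((e_1-x)^2;x)$.

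The computational core is the evaluation of these moments, which I would obtain directly from the explicit formulas (\ref{e1forcomponents}) and (\ref{e2forcomponents}) through $A_j((e_1-x)^2;x)=A_j(e_2;x)-2xA_j(e_1;x)+x^2$. Each has the shape $\big(c\,n\varphi^2(x)+q_j(x)\big)/[(n+2)(n+3)]$ with $q_j$ an explicit quadratic; comparing $q_0,q_1,q_2$ shows that the maximum over $j$ is realized by $j=2$ on $[0,1/2]$ and by $j=0$ on $[1/2,1]$, the crossover sitting at $x=1/2$. This is the source of the two-case definition of $\delta_n^2(x)$, which, up to the harmless constant factors absorbed into the final constant, equals this maximal moment. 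Taking $\delta=\delta_n(x)$ to be its square root makes each factor $1+\delta^{-1}\sqrt{A_j((e_1-x)^2;x)}$ at most $2$, and summing against $|a_j(x,n)|$ yields the estimate $|D^{M,2}_n(f;x)-f(x)|\le 2M_2(x,n)\omega(f,\delta_n(x))$.

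For the final assertion, boundedness of $a_0(x,n),a_1(x,n),a_2(x,n)$ gives $M_2(x,n)\le C$, while the explicit $\delta_n^2(x)$ has leading term $n\varphi^2(x)/[(n+2)(n+3)]\sim\varphi^2(x)/n$, the quadratic remainder being $O(n^{-2})$ and hence dominated once $x$ is kept away from the endpoints; this produces $\delta_n(x)\le C\sqrt{\varphi^2(x)/n}$ and the stated pointwise rate. I expect the main difficulty to be the moment bookkeeping---tracking the coefficients of $q_j(x)$ precisely enough both to pin down the correct piecewise maximum and to confirm that the crossover is exactly $x=1/2$---since the shape of $\delta_n^2(x)$, and hence the clean constant $2$, hinges entirely on that step.
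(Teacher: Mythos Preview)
Your proposal is correct and follows essentially the same approach as the paper: decompose $D^{M,2}_n(f;x)-f(x)=\sum_{j}a_j(x,n)(A_j(f;x)-f(x))$, bound each $|A_j(f;x)-f(x)|$ by the standard $(1+\delta^{-1}\sqrt{A_j((e_1-x)^2;x)})\omega(f,\delta)$ estimate via the modulus property and Cauchy--Schwarz, compute the three second moments from (\ref{e1forcomponents})--(\ref{e2forcomponents}), and take $\delta_n^2(x)$ to be their maximum, which is attained by $A_2$ on $[0,1/2]$ and $A_0$ on $[1/2,1]$. The paper does exactly this, with the same choice $\delta_n(x)=\max_j\sqrt{A_j((e_1-x)^2;x)}$ and the same sketch for the ``moreover'' clause.
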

\begin{proof}
In view of  $a_0(x,n)+a_1(x,n)+a_2(x,n)=1,$  $D^{M,2}_n\left((t-x)^j,x\right)=0,$ $j=0,1$ and $D^{M,2}_n(1;x)=1,$ we can write
\begin{align*}
D^{M,2}_n(f;x)-f(x)&=\sum_{j=0}^{2}a_j(x,n)\left(A_j(f;x)-f(x)\right)=\sum_{j=0}^{2}E_j, \quad \textrm{say}.
\end{align*}
For $E_j$ we have
\begin{align*}
\left|E_j\right|&\leq \left|a_j(x,n)\right|\left(1+\frac{1}{\delta}\sqrt{A_j\left((e_1-x)^2,x\right)}\right)\omega\big(f,\delta_n(x)\big).
\end{align*}
By (\ref{e2forcomponents}) and (\ref{e1forcomponents}), we obtain
\[A_0\left((e_1-x)^2,x\right)=\frac{-2 (n-12) x^2+2 (n-7) x+2}{(n+2) (n+3)},\]
\[A_1\left((e_1-x)^2,x\right)=\frac{-2 (n-12) x^2+2 (n-12) x+6}{(n+2) (n+3)}\] and
\[A_2\left((e_1-x)^2,x\right)=\frac{-2 (n-12) x^2+2 (n-17) x+12}{(n+2) (n+3)}.\]
It is straight forward to verify that for each $j=0,1,2$
\begin{equation*}\left|A_j\left((e_1-x)^2,x\right)\right|\leq
\left\{
\begin{array}{ll}
\displaystyle{\frac{(6-17x+12x^2)+n\varphi^2(x)}{(n+2)(n+3)}}, & \hbox{$0\leq x \leq 1/2$} \vspace{2mm}\\
\displaystyle{\frac{(1-7x+12x^2)+n\varphi^2(x)}{(n+2)(n+3)}},& \hbox{$1/2\leq x \leq 1$.}
\end{array}
\right.\end{equation*}
The proof now follows by choosing $\delta_n(x)=\max_{0\leq j \leq 2}\left|A_j\left((e_1-x)^2,x\right)\right|.$
\end{proof}
Theorem \ref{ordinarymodulus} is of theoretical interest more than that of application. Actually, the second estimate is exactly same as those for Bernstein polynomials.  However, next theorem provides the order of approximation explicitly in terms of the smoothness of the function. We need the  Ditzian Totik modulus of smoothness for $\phi(x)=\sqrt{x(1-x)}$ which is given by
$$ \omega_{\varphi}(f,t)=\sup_{0<h\le t}\sup_{x\pm h\varphi(x)/2\ge 0}\left\{\left|f(x+ h\varphi(x)/2)-f(x-h\varphi(x)/2)\right|\right\}$$
 and appropriate Peter's K functional is defined as
$$K_{\varphi}(f,t)=\inf_{g\in W_{\varphi}^2}\{\|f-g\|+t\|\varphi g'\|+t^2\|g'\|\},$$ where $t>0$, the norm $\|.\|$ is the sup- norm on $[0,1]$ and
 $W_{\varphi}^2=\{g:g\in AC_{loc}, \|\varphi g'\|<\infty, \|g'\|<\infty\}.$
By (\cite{Ditzian1987}, Theorem 3.1.2), it is known that  $K_{\varphi}(f,t) \sim \omega_{\varphi}(f,t)$ and  there exists an absolute constant $C>0$ such that
\begin{equation} \label{rel}C^{-1}\omega_{\varphi}(f,t) \le K_{\varphi}(f,t)\le C \omega_{\varphi}(f,t).\end{equation}
\begin{thm}\label{rate of approximation} Let $f\in C[0,1]$ and $a_j(x,n),j=0,1,2$ be sequences of order at most $n.$ Then there exists a constant $C$ independent of $f,x$ and $n$ such that
\[\|D^{M,2}_n(f)-f\|\leq C \omega^6_{\varphi}\left(f,\frac{1}{n^{1/3}}\right).\]
\end{thm}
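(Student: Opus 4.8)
The plan is to run the standard Ditzian--Totik argument through the equivalent sixth-order $K$-functional. Writing
\[
K_{6,\varphi}(f,t^6)=\inf\Big\{\|f-g\|+t^6\|\varphi^6 g^{(6)}\|:\ g^{(5)}\in AC_{loc}\Big\},
\]
one has the equivalence $K_{6,\varphi}(f,t^6)\sim\omega^6_\varphi(f,t)$ (the sixth-order analogue of (\ref{rel}), see \cite{Ditzian1987}). Choosing $t=n^{-1/3}$, so that $t^6=n^{-2}$, it suffices to establish
\[
|D^{M,2}_n(f;x)-f(x)|\le C\big(\|f-g\|+n^{-2}\|\varphi^6 g^{(6)}\|\big)
\]
uniformly in $x$ and in admissible $g$, since taking the infimum over $g$ and applying the equivalence then yields the theorem. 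I split
\[
|D^{M,2}_n(f;x)-f(x)|\le |D^{M,2}_n(f-g;x)-(f-g)(x)|+|D^{M,2}_n(g;x)-g(x)|
\]
and estimate the two summands by a uniform boundedness bound and by a smoothing bound, respectively.

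The step I expect to be the main obstacle is the uniform boundedness $\|D^{M,2}_n(h)\|\le M\|h\|$ with $M$ independent of $n$, which controls the first summand by $(M+1)\|f-g\|$. Since $(n+1)\int_0^1 p_{n,k}=1$, this reduces to $\sup_{x}\sum_{k}|p^{M,2}_{n,k}(x)|\le M$, and the difficulty is that $D^{M,2}_n$ is \emph{not} positive: by (\ref{a0})--(\ref{a2}) the coefficients $a_0(x,n),a_2(x,n)$ are negative and of size $n$ on compact subsets of $(0,1)$. The resolution is the cancellation hidden in $p^{M,2}_{n,k}=a_0(x,n)p_{n-2,k}+a_1(x,n)p_{n-2,k-1}+a_2(x,n)p_{n-2,k-2}$: the $O(n)$ leading parts of $a_0,a_1,a_2$ stand in the exact ratio $-1:2:-1$, so they assemble into the discrete second difference $p_{n-2,k}-2p_{n-2,k-1}+p_{n-2,k-2}$, whose weighted $\ell^1$-mass in $k$ is $O\big(1/(n\varphi^2(x))\big)$ and hence compensates the factor $n\varphi^2(x)$; the remaining parts of the coefficients are bounded and act on weights of total mass one. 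Summing in $k$ gives the uniform constant $M$, after a separate (uniform) treatment of the endpoints where $\varphi^2\to0$.

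For the second summand I would prove $|D^{M,2}_n(g;x)-g(x)|\le Cn^{-2}\|\varphi^6 g^{(6)}\|$. Expanding $g$ by Taylor's formula at $x$ to order five with integral remainder $R_6(t)=\tfrac{1}{5!}\int_x^t(t-s)^5 g^{(6)}(s)\,ds$ and applying $D^{M,2}_n$, the constant and linear terms drop out because $D^{M,2}_n((e_1-x)^i;x)=0$ for $i=0,1$, while by Corollary \ref{centralmoments} (together with the recurrence of Lemma \ref{orederpolynomial} for the odd orders) every central moment of order $2\le i\le6$ carries the full factor $((n+2)(n+3))^{-1}$ and is thus $O(n^{-2})$. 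The remainder is controlled by the weighted estimate
\[
\Big|\int_x^t(t-s)^5\varphi^{-6}(s)\,ds\Big|\le C\,|t-x|^6\varphi^{-6}(x),
\]
an iterated form of the bound used in (\ref{eqn13}), combined with $D^{M,2}_n((e_1-x)^6;x)\sim\varphi^6(x)/((n+2)(n+3))$, which again produces the factor $n^{-2}$.

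The delicate point closing the argument is that the intermediate Taylor terms a priori carry the seminorms $\|\varphi^i g^{(i)}\|$, $2\le i\le5$, whereas the $K$-functional admits only $\|\varphi^6 g^{(6)}\|$. These are absorbed through the weighted Landau--Kolmogorov inequalities $\|\varphi^i g^{(i)}\|\le \varepsilon\|\varphi^6 g^{(6)}\|+C(\varepsilon)\|g\|$; the accompanying $\|g\|$-terms, multiplied by the uniform $O(n^{-2})$ moment bounds and rewritten via $\|g\|\le\|f\|+\|f-g\|$, contribute only at the saturation order $n^{-2}$ and are thereby absorbed into the estimate together with the admissible $\|f-g\|$ term. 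Assembling the two summands gives the displayed inequality, and the infimum over $g$ together with $K_{6,\varphi}(f,n^{-2})\sim\omega^6_\varphi(f,n^{-1/3})$ completes the proof.
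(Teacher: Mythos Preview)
Your proposal follows the paper's argument essentially step for step: split through a smooth $g$, bound $D^{M,2}_n(f-g)$ by uniform boundedness, Taylor-expand $g$ to fifth order, control the central moments of orders $2$ through $6$ via Corollaries~\ref{centralmoments} and~\ref{momentsforAi}, absorb the intermediate weighted derivative seminorms by Landau--Kolmogorov (what the paper cites as the ``Goldberg and Meir property''), and finish with $K_{6,\varphi}(f,n^{-2})\sim\omega^6_\varphi(f,n^{-1/3})$.

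The only substantive difference is that you make explicit two points the paper glosses over. First, you actually argue the uniform bound $\|D^{M,2}_n h\|\le M\|h\|$ by isolating the $-(n+8)\varphi^2(x):2(n+8)\varphi^2(x):-(n+8)\varphi^2(x)$ leading parts of $a_0,a_1,a_2$ and recognising the resulting second $k$-difference $p_{n-2,k}-2p_{n-2,k-1}+p_{n-2,k-2}$, whose $\ell^1$-mass $O((n\varphi^2(x))^{-1})$ cancels the growth; the paper simply writes the $C\|f-g\|$ term without justification, even though $D^{M,2}_n$ is not positive and $|a_j(x,n)|\sim n$. Second, you use the integral remainder together with the iterated $\varphi^{-6}$ estimate to land directly on $\|\varphi^6 g^{(6)}\|$, whereas the paper uses the Lagrange form and passes from $\|g^{(6)}\|\varphi^6(x)$ to $\|\varphi^6 g^{(6)}\|$ without comment. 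Both refinements strengthen the same skeleton rather than replace it.
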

\begin{proof}
We choose a function $g\in C^6[0,1]$ such that
\begin{equation}\label{g function}
\|f-g\|\leq C\omega^6_{\varphi}\left(f,\frac{1}{n^{1/3}}\right).
\end{equation}
Then, an application of $D^{M,2}_n$ to the Taylor's theorem we have that
\begin{align*}
D^{M,2}_n(g;x)-g(x)&=
\sum_{j=2}^{5}\frac{g^{j}(x)}{j!}D^{M,2}_n\left((e_1-x)^j,x\right)+R(g,t;x),\\
\end{align*}
where $R(g,t;x)=D^{M,2}_n\left(\frac{(t-x)^6}{6!}g^{(6)}(\xi),x\right)$ and $\xi$ lies between $t$ and $x.$
By Cor. 2 to lemma\ref{centralmoments},
\begin{align*}
\left|\sum_{j=3}^{5}\frac{g^{j}(x)}{j!}D^{M,2}_n\left((e_1-x)^j,x\right)\right|\leq C \sum_{j=3}^{5}\frac{\left|g^{j}(x)\right|}{j!(n+2)(n+3)}.
\end{align*}
By estimate (\ref{Aj2m}), we get
\begin{align*}
\left|R(g,t;x)\right|&\leq C\frac{\left\|g^{(6)}\right\|}{6!}\sum_{i=0}^{2}\left|a_i(x,n)\right|\frac{\varphi^{6}(x)}{(n+2)(n+3)(n+4)}\\
&\leq M(x,n)\big\|g^{(6)}\big\|\frac{\varphi^{6}(x)}{(n+2)(n+3)(n+4)}\leq C \frac{1}{n^2}\big\|\varphi^{6}g^{(6)}\big\|,
\end{align*}
where $M(x,n)=\sum_{j=0}^{2}\left|a_j(x,n)\right|.$
Collecting these estimates and then using Goldberg and Meir property, we get
\begin{align*}
\|D^{M,2}_n(f,\cdot)-f(\cdot)\|&\leq C\|f-g\|+\frac{1}{n^2}\sum_{j=3}^{5}\frac{\big\|g^{j}\big\|}{j!}+\frac{1}{n^2}\big\|\varphi^{6}g^{(6)}\big\|\\
&\leq C\left(\|f-g\|+\frac{1}{n^2}\big\|\varphi^{6}g^{(6)}\big\|\right)\leq C K_{6,\varphi}\left(f,\frac{1}{n^2}\right).
\end{align*}
Finally, using equivalence of $K_{6,\varphi}\left(f,\frac{1}{n^2}\right)$ and $\omega^6_{\varphi}\Big(f,\frac{1}{n^{1/3}}\Big)$ we conclude the proof.
\end{proof}
\begin{corollary}\label{lastcor}
 By the equivalence $\omega^6_{\varphi}(f,\frac{1}{n^{1/3}})$ of $\omega^3_{\varphi}(f''',\frac{1}{n^{1/3}})$ and in view of the limit $\lim_{n\rightarrow \infty}\frac{\omega^3_{\varphi}(f''',\frac{1}{n^{1/3}})}{\frac{1}{n^{1/3}}}>0$ it follows that
 \[\|D^{M,2}_n(f,\cdot)-f(\cdot)\|\sim \frac{1}{n}\omega^3_{\varphi}(f''',\frac{1}{n^{1/3}})\sim \frac{1}{n^2}.\]
\end{corollary}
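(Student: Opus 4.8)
The plan is to establish the two equivalences separately, since each rests on different ingredients already available in the paper. For the first relation I would start from the direct estimate of Theorem~\ref{rate of approximation}, which gives $\|D^{M,2}_n(f)-f\|\le C\,\omega^6_\varphi\!\left(f,n^{-1/3}\right)$, and then rewrite the sixth--order modulus of $f$ as a third--order modulus of $f'''$. Using the lifting equivalence quoted in the statement, namely $\omega^6_\varphi\!\left(f,n^{-1/3}\right)\sim \tfrac{1}{n}\,\omega^3_\varphi\!\left(f''',n^{-1/3}\right)$, this immediately yields the upper half of the first $\sim$, that is, $\|D^{M,2}_n(f)-f\|\le C\,\tfrac{1}{n}\,\omega^3_\varphi\!\left(f''',n^{-1/3}\right)$.

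To turn this one--sided bound into a genuine two--sided equivalence I would invoke the Voronovskaya--type identity of Theorem~\ref{Vornovaskayatype}. The relation $\lim_{n\to\infty}(n+2)(n+3)\big(D^{M,2}_n(f;x)-f(x)\big)=-\tfrac32 f''(x)$ shows that, whenever $f''\not\equiv 0$, the error $\|D^{M,2}_n(f)-f\|$ cannot decay faster than the saturation order, which furnishes the reverse inequality $\tfrac{1}{n}\,\omega^3_\varphi\!\left(f''',n^{-1/3}\right)\le C\,\|D^{M,2}_n(f)-f\|$ once the right--hand modulus is matched with the saturation rate. I expect this reverse estimate to be the main obstacle: Theorem~\ref{rate of approximation} only provides a direct (upper) bound, so the matching lower bound has to be extracted from the saturation content of Theorem~\ref{Vornovaskayatype} rather than from the direct theorem itself.

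For the second equivalence $\tfrac{1}{n}\,\omega^3_\varphi\!\left(f''',n^{-1/3}\right)\sim n^{-2}$ I would use the hypothesis $\lim_{n\to\infty}\omega^3_\varphi(f''',n^{-1/3})/n^{-1/3}>0$. This limit condition fixes the exact order of decay of $\omega^3_\varphi\!\left(f''',n^{-1/3}\right)$ and therefore pins the product $\tfrac{1}{n}\,\omega^3_\varphi\!\left(f''',n^{-1/3}\right)$ to the saturation rate $n^{-2}$ from both sides. Chaining this with the first equivalence then gives $\|D^{M,2}_n(f)-f\|\sim \tfrac{1}{n}\,\omega^3_\varphi\!\left(f''',n^{-1/3}\right)\sim n^{-2}$, which is the assertion of the corollary.
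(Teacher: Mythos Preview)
The paper supplies no proof for this corollary beyond the two clauses already written into its statement: the asserted equivalence between $\omega^6_\varphi(f,n^{-1/3})$ and $\omega^3_\varphi(f''',n^{-1/3})$, and the limit hypothesis. Your plan tracks that same skeleton---Theorem~\ref{rate of approximation} for the upper bound, the modulus lifting, then the limit condition---so you are aligned with the paper. Your use of Theorem~\ref{Vornovaskayatype} to manufacture the reverse inequality in the first $\sim$ is your own addition; the paper never justifies that direction (the Remark after the corollary only says the corollary is \emph{consistent} with Theorem~\ref{Vornovaskayatype}, not that the latter furnishes a lower bound).

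There is, however, a genuine arithmetical gap you do not flag. You paraphrase the lifting relation as $\omega^6_\varphi(f,n^{-1/3})\sim \tfrac{1}{n}\,\omega^3_\varphi(f''',n^{-1/3})$, inserting a factor $n^{-1}$ that the paper's wording does not contain; that is indeed the form one needs, and it agrees with the standard shape $\omega^{r+j}_\varphi(f,t)\asymp t^{j}\,\omega^{r}_\varphi(f^{(j)},t)$ at $t=n^{-1/3}$, $j=3$, so here you have silently repaired the statement rather than quoted it. But the second equivalence cannot be obtained from the limit hypothesis as printed: if $\lim_{n\to\infty}\omega^3_\varphi(f''',n^{-1/3})\big/n^{-1/3}$ is finite and positive, then $\omega^3_\varphi(f''',n^{-1/3})\sim n^{-1/3}$, hence $\tfrac{1}{n}\,\omega^3_\varphi(f''',n^{-1/3})\sim n^{-4/3}$, not $n^{-2}$. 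To land on $n^{-2}$ one needs $\omega^3_\varphi(f''',n^{-1/3})\sim n^{-1}$, i.e.\ the denominator in the limit should be $n^{-1}$ (equivalently $t^{3}$). You accept the printed hypothesis at face value, so that step of your argument would fail exactly as the paper's does.
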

\begin{remark}
The Cor.\ref{lastcor}  verifies the rate of convergence given in Theorem \ref{Vornovaskayatype} and Theorem 3.1 in\cite{ana2019} for the operators $D^{M,2}_n(f,x).$
\end{remark}

\noindent\textbf{Acknowledgement}\\
 The author, Asha Ram Gairola, was supported by project under MHRD/UGC - Empowered Committee's Basic Science Research Programme (No.F.30-371/2017(BSR).


\begin{thebibliography}{}
\bibitem{ana2019} A.-M. Acu, V. Gupta and G. Tachev,  Better numerical approximation by Durrmeyer type operators, Results Math. (2019) 74-90.
  \bibitem{MMD} M.M. Derriennic, Sur l'approximation de fonctions int\'{e}grables sur [0, 1] par des
polyn\^{o}mes de Bernstein modifies, J. Approx. Theory, 31(1981), 325-343.
 \bibitem{ZDKI1989} Z. Ditzian and K. Ivanov, Bernstein-type operators and their derivatives, J. Approx. Theory 56(1)
(1989), 72--90.
 \bibitem{Ditzian1987} Z. Ditzian and  V. Totik, Moduli of Smoothness, Springer-Verlag, New York, (1987).
 \bibitem{jld1967}  J. L. Durrmeyer, Une formule d'inversion de la transform\'{e}e de Laplace:
applications \`{a} la th\'{e}orie des moments, Th\'{e}se de 3e cycle,  Paris, 1967.
\bibitem{HGonska} H. H. Gonska, On approximation of continuously differentiable functions
by positive linear operators, Bull. Austral. Math. Soc., Vol. 27 (1983), 73--81.
\bibitem{HHGXLZ1991}  H. H. Gonska and X.L. Zhou, A global inverse theorem on simultaneous approximation by
Bernstein-Durrmeyer operators, J. Approx. Theory 67(3) (1991), 284--302.
 \bibitem{vgrpa}V. Gupta and  R. P. Agarwal, Convergence Estimates In Approximation Theory, Springer, New York (2014).
 \bibitem{vg2009} V. Gupta, A.-J. López-Moreno, J.-M. Latorre-Palacios, On simultaneous approximation of the Bernstein Durrmeyer operators, Appl. Math. Comput. 213(1) (2009), 112-120.
\bibitem{vghmsri2018} V. Gupta and H. M. Srivastava, A General Family of the Srivastava-Gupta Operators Preserving Linear Functions, European Journal of Pure and Applied Mathematics, 11(3)(2018), 575--579.
\bibitem{KajlaandAcar} A. Kajla and T. Acar,  Modified $\alpha-$Bernstein operators with
better approximation properties, Ann. Funct. Anal. 10(4) (2019), 570--582.
\bibitem{Khosravianetal} H. K. Arab, M. Dehghan and M. R. Eslahchi, A new approach to improve the order
of approximation of the Bernstein operators: Theory and applications, Numer. Algorithms
77(1) (2018), 111--150.
\bibitem{AL1972} A. Lupa\c{s}, Die Folge der Betaoperatoren, Dissertation, Universit\"{a}t Stuttgart, (1972).
 \bibitem{ozarslan} M. A. \"{O}zarslan and H. Aktu\v{g}lu, Local approximation for certain King type operators, Filomat 27(1) (2013), 173--181.
\bibitem{srivastavhmvgupta2003}H. M. Srivastava and V. Gupta, A certain family of summation-integral type operators, Math. Comput. Modelling, 37(2003), 1307--1315.

\end{thebibliography}
\end{document}